\def\beq*{\begin{eqnarray*}}
\def\eeq*{\end{eqnarray*}}
\def\b{\beta}
\def\a{\alpha}
\def\haa{\widehat{a}}
\def\hbb{\widehat{b}}
\def\n{\noindent}
\def\hj{\widehat{j}}
\def\hn{\widehat n}
\def\mbP{\mathbb{P}}
\def\hb{\widehat{\beta}}
\def\ha{\widehat{\alpha}}
\def\n{\noindent}
\def\mcS{\mathcal{S}}
\newtheorem{case}{Case}
\newtheorem{subcase}{Case}
\numberwithin{subcase}{case}
\newtheorem{theorem}{Theorem}
\newtheorem{lemma}[theorem]{Lemma}
\newtheorem{corollary}[theorem]{Corollary}
\newtheorem{definition}{Definition}
\newtheorem{remark}{Remark}
\author[Pawe{\l} Hitczenko]{Pawe{\l} Hitczenko${}^\dagger$}
\thanks{$\dagger$ Partially supported by a grant from 
Simons Foundation (grant \#208766 to Pawe{\l} Hitczenko)}
\address{Department of Mathematics, Drexel University, Philadelphia, 
PA  19104, USA} 
\email{phitczenko@math.drexel.edu}
\author[Amanda Lohss]{Amanda Lohss${}^\dagger$}
\address{Department of Mathematics, Drexel University, Philadelphia, 
PA  19104, USA} 
\email{agp47@drexel.edu}
\title[Distribution of parameters in staircase tableaux]{On the asymptotic  distribution of
 parameters
\\ 
in  random weighted  staircase tableaux
}
\subjclass[2010]{60C05 (05A15, 05E99, 60F05)}
\begin{document}
\maketitle
\begin{abstract}
 In this paper, we study staircase tableaux, a combinatorial object
 introduced due to its connections with the asymmetric
 exclusion process (ASEP) and Askey-Wilson polynomials. Due to their
 interesting connections, staircase tableaux have been the object of
 study in many recent papers. More specific to this paper, the
 distribution of various parameters in random staircase tableaux has
 been studied. There have been interesting results on parameters
 along the main diagonal, however, no such results have appeared for
 other diagonals. It was conjectured that the distribution of the
 number of symbols
 along the $k$th diagonal is asymptotically Poisson as $k$ and the
 size of the tableau tend to infinity.  We partially prove this
 conjecture; more specifically we prove it for the second and the third main diagonal.
\end{abstract}

\section{Introduction}
\label{sec:in}
In this paper, we study staircase tableaux, a combinatorial object
introduced (in \cite{CW1}, \cite{CW2}) due to connections with the asymmetric exclusion
process (ASEP)  and Askey-Wilson polynomials. The
ASEP can be defined as a Markov chain with $n$ sites, with at most
one particle occupying each site. Particles may jump to any
neighboring empty site with rate $u$ to the right and rate $q$ to the
left. Particles may enter and exit at the first site with rates $\a$ and $\gamma$ respectively. Similarly, particles may enter and exit the last site with rates $\delta$ and $\b$. The ASEP is an interesting particle model that has been
studied extensively in mathematics and physics. It has also been studied in many other fields, including computational biology \cite{Bu}, and biochemistry, specifically as a primitive model for protein synthesis \cite{GMP}.
Staircase tableaux were introduced per a connection between the steady state distribution of the ASEP and the generating function for staircase tableaux \cite{CW2}. See Section \ref{DN} for a discussion of the ASEP and its connection with staircase tableaux.

In addition to interest in its own right, the ASEP has been known to
have interesting connections in combinatorics and
analysis. Consequently, staircase tableaux have similarly been
connected to many combinatorial objects and a family of
polynomials. In fact, the generating function for staircase tableaux
has been used to give a formula for the moments of Askey-Wilson
polynomials  \cite{CW2}, \cite{CSSW}. Staircase tableaux have also
inherited many interesting properties from other types of tableaux
(See \cite{ABN}, \cite{CH}, \cite{CN}, \cite{CW2}, \cite{CW4}, \cite{CW3},
\cite{HJ3}, \cite{SW}). We refer to \cite[Table~1]{HJ} for a
description of some of the bijections between the various types of the
tableaux. \\

Due to all these interesting connections, staircase tableaux have been the
object of study in many recent papers. 
 In particular, the asymptotic distribution of parameters along the main diagonal is
known. The number of $\alpha/\gamma$ symbols and the number of
$\beta/\delta$ symbols along the main diagonal were proven to be
asymptotically normal in \cite{D-HH}, and the distribution of boxes
along the main diagonal was given in \cite{HJ}. However, the
distributions of parameters on the other diagonals have not been
studied specifically.

In \cite{HJ}, the
distribution of each box in a staircase tableau was given, and  
the expected values of the number of symbols on the diagonals were computed.
 Based on that  it was
conjectured that the distribution of the number of symbols along the $k$th diagonal
is asymptotically Poisson as $k$ and the size of the tableau  tend to infinity. The main results of this
paper is the proof of this in two special cases when $k=n-1$ and $k=n-2$. That is, we show
that the distribution of the number of symbols along the second and the third main diagonal is
asymptotically Poisson with parameter $1$ (see
Theorem~\ref{thm:symb} and Theorem~\ref{thm:3rdx} below). Similarly, we show that the number of $\a$'s
(resp. $\b$'s) along the second and the third main diagonal is
asymptotically Poisson with parameter $1/2$ (see
Theorem~\ref{DT}, Corollary~\ref{BC}, and Theorem~\ref{alpha_on_3rd}). 

The paper is organized as follows. We  introduce terminology, notation and provide some background information in the next section. Section~\ref{SAB} concerns the second main diagonal and the last section is devoted to the third main diagonal. The results for the second main diagonal were discussed in an extended abstract~\cite{HP}.

\section{Notation and Preliminaries } \label{DN}
Staircase tableaux were first introduced in \cite{CW1} and \cite{CW2} as follows:
\begin{definition} \label{DEFST}
A staircase tableau of size n is a Young diagram of shape (n, n-1, ..., 1) such that:
\begin{enumerate}
\item The boxes are empty or contain an $\a$, $\b$, $\gamma$, or $\delta$.
\item All boxes in the same column and above an $\a$ or $\gamma$ are empty.
\item All boxes in the same row and to the left of an $\beta$ or $\delta$ are empty.
\item Every box on the diagonal contains a symbol.
\end{enumerate}
\end{definition}
The rows and columns in a staircase tableau are numbered from $1$ through $n$, beginning with the box in the NW-corner and continuing south and east respectively. Each box is numbered $(i,j)$ where $i,j \in \{ 1, 2, ..., n \}$. Note that $i + j \leq n + 1$. We refer to the collection of boxes $(n-i+1, i)$ such that $i=1,2, ..., n$ as the main diagonal,  the collection of boxes $(n-i, i)$ such that $i=1,2, ..., n-1$ as the second main diagonal, and  the collection of boxes $(n-i-1, i)$ such that $i=1,2, ..., n-2$ as the third main diagonal.

Following the conventions of \cite{HJ}, $\mcS_{n}$ is the set of all staircase tableaux of size $n$. For a given $S \in \mcS_{n}$, the number of $\a$'s, $\gamma$'s, $\b$'s and $\delta$'s in $S$ are denoted by $N_{\a}, N_{\b}, N_{\gamma},$ and $N_{\delta}$ respectively. The weight of $S$ is the product of all symbols in $S$:
\[
wt(S) = \a^{N_{\a}}\b^{N_{\b}}\gamma^{N_{\gamma}}\delta^{N_{\delta}}.
\]
It was known (see e.g. \cite{CD-H}) that the generating function $Z_{n}(\alpha, \beta, \gamma, \delta) := \sum_{S \in \mcS_{n}} wt(S)$ is equal to the product:
\begin{equation}\label{Z4}
Z_{n}(\a, \b, \gamma, \delta)=\prod^{n-1}_{i=0} (\a + \b + \delta + \gamma + i(\a + \gamma)(\b + \delta)).
\end{equation}

\vspace{1cm}

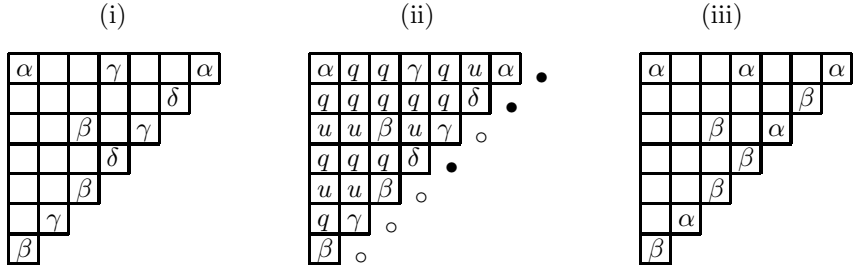
\begin{figure}[htbp] 
\setlength{\unitlength}{0.4cm}
\begin{center}
\begin{picture}
(0,0)(15,8)\thicklines

\put(3,8){(i)} \put(13,8){(ii)}\put(23,8){(iii)}


\put(0,0){\line(0,1){7}}
\put(1,0){\line(0,1){7}}
\put(2,1){\line(0,1){6}}
\put(3,2){\line(0,1){5}}
\put(4,3){\line(0,1){4}}
\put(5,4){\line(0,1){3}}
\put(6,5){\line(0,1){2}}
\put(7,6){\line(0,1){1}}

\put(0,7){\line(1,0){7}}
\put(0,6){\line(1,0){7}}
\put(0,5){\line(1,0){6}}
\put(0,4){\line(1,0){5}}
\put(0,3){\line(1,0){4}}
\put(0,2){\line(1,0){3}}
\put(0,1){\line(1,0){2}}
\put(0,0){\line(1,0){1}}

\put(3.25,6.25){$\gamma$}
\put(6.25, 6.25){$\a$}
\put(5.25, 5.25){$\delta$}
\put(2.25, 4.25){$\b$}
\put(4.25, 4.25){$\gamma$}
\put(3.25, 3.25){$\delta$}
\put(2.25, 2.25){$\b$}
\put(1.25, 1.25){$\gamma$}
\put(0.25, 0.25){$\b$}
\put(0.25, 6.25){$\a$}

\put(10,0){\line(0,1){7}}
\put(11,0){\line(0,1){7}}
\put(12,1){\line(0,1){6}}
\put(13,2){\line(0,1){5}}
\put(14,3){\line(0,1){4}}
\put(15,4){\line(0,1){3}}
\put(16,5){\line(0,1){2}}
\put(17,6){\line(0,1){1}}

\put(10,7){\line(1,0){7}}
\put(10,6){\line(1,0){7}}
\put(10,5){\line(1,0){6}}
\put(10,4){\line(1,0){5}}
\put(10,3){\line(1,0){4}}
\put(10,2){\line(1,0){3}}
\put(10,1){\line(1,0){2}}
\put(10,0){\line(1,0){1}}

\put(16.25, 6.25){$\a$}
\put(15.25, 6.25){$u$}
\put(14.25, 6.25){$q$}
\put(13.25, 6.25){$\gamma$}
\put(12.25, 6.25){$q$}
\put(11.25, 6.25){$q$}
\put(10.25, 6.25){$\a$}
\put(15.25, 5.25){$\delta$}
\put(14.25, 5.25){$q$}
\put(13.25, 5.25){$q$}
\put(12.25, 5.25){$q$}
\put(11.25, 5.25){$q$}
\put(10.25, 5.25){$q$}
\put(10.25, 4.25){$u$}
\put(11.25, 4.25){$u$}
\put(12.25, 4.25){$\b$}
\put(13.25, 4.25){$u$}
\put(14.25, 4.25){$\gamma$}
\put(10.25, 3.25){$q$}
\put(11.25, 3.25){$q$}
\put(12.25, 3.25){$q$}
\put(13.25, 3.25){$\delta$}
\put(10.25, 2.25){$u$}
\put(11.25, 2.25){$u$}
\put(12.25, 2.25){$\beta$}
\put(10.25, 1.25){$q$}
\put(11.25, 1.25){$\gamma$}
\put(10.25, 0.25){$\b$}

\put(17.5,6.0){$\bullet$}
\put(16.5,5.0){$\bullet$}
\put(15.5,4.0){$\circ$}
\put(14.5,3.0){$\bullet$}
\put(13.5,2.0){$\circ$}
\put(12.5,1.0){$\circ$}
\put(11.5,0.0){$\circ$}

\put(21,0){\line(0,1){7}}
\put(22,0){\line(0,1){7}}
\put(23,1){\line(0,1){6}}
\put(24,2){\line(0,1){5}}
\put(25,3){\line(0,1){4}}
\put(26,4){\line(0,1){3}}
\put(27,5){\line(0,1){2}}
\put(28,6){\line(0,1){1}}

\put(21,7){\line(1,0){7}}
\put(21,6){\line(1,0){7}}
\put(21,5){\line(1,0){6}}
\put(21,4){\line(1,0){5}}
\put(21,3){\line(1,0){4}}
\put(21,2){\line(1,0){3}}
\put(21,1){\line(1,0){2}}
\put(21,0){\line(1,0){1}}

\put(24.25,6.25){$\a$}
\put(27.25, 6.25){$\a$}
\put(26.25, 5.25){$\b$}
\put(23.25, 4.25){$\b$}
\put(25.25, 4.25){$\a$}
\put(24.25, 3.25){$\b$}
\put(23.25, 2.25){$\b$}
\put(22.25, 1.25){$\a$}
\put(21.25, 0.25){$\b$}
\put(21.25, 6.25){$\a$}

\end{picture}

\vspace{3cm}
\caption{A staircase tableau of size $7$ with weight $\a^{2}
  \b^{3} \delta^{2} \gamma^{3}$. (ii) The extension of (i) to a
  staircase tableau of weight $\a^{2} \b^{3} \delta^{2}
  \gamma^{3} u^{6} q^{13}$ 
and type $\bullet \bullet\circ  \bullet \circ \circ \circ$. 
(iii) The $\a/\b$-staircase tableau obtained from  (i) by replacing $\gamma$'s with $\a$'s and $\delta$'s with $\b$'s. It's weight is $\a^{5}\b^{5}$. }
\label{F1}
\end{center}

\end{figure}

Notice that there is an involution on the staircase tableaux of a
given size obtained by interchanging the rows and the columns, $\a$'s and $\b$'s, and $\gamma$'s and $\delta$'s, see further \cite{CD-H}. In particular, the fact that $\a$'s and $\b$'s are identical up to this involution allows us to extend results for $\a$'s to results for $\b$'s.

The connection between staircase tableaux and the ASEP requires an
extension of this preceding definition. After following the rules from
Definition \ref{DEFST}, we then fill all the empty boxes with $u$'s
and $q$'s, the rates at which particles in the ASEP jump to the right
and left respectively. We do so by first filling all boxes to the left
of a $\b$ with a $u$ and to the left of a $\delta$ with a $q$. Then,
we fill the empty boxes with a $u$ if it is above an $\a$ or a $\delta$, and $q$ otherwise. The weight of a staircase tableau filled as such is defined in the same way, the product of the parameters in each box. Also, the total weight of all such staircase tableaux, which we denote by $\mcS^{'}_{n}$, is given by:
\[
Z_{n}(\a, \b, \gamma, \delta, q, u) := \sum_{S \in \mcS^{'}_{n}} wt(S).
\]
Then, each staircase tableau of size $n$ is associated with a state of
the ASEP with $n$ sites (See Figure \ref{F1}). This is done by
aligning the Markov chain with the diagonal entries of the staircase tableau. A site is filled if the corresponding diagonal entry is an $\a$ or a $\gamma$ and a site is empty if the corresponding diagonal entry is a $\b$ or a $\delta$. Each staircase tableau's associated state of the ASEP is called its type.

Using this association, it was shown in \cite{CW2} that the steady state probability that the ASEP is in state $\eta$ is given by:
\[
\frac{\sum_{T \in \mathfrak{T}} wt(T)}{Z_{n}}, 
\]
where $\mathfrak{T}$ is the set of all staircase tableau of type $\eta$.

For the purposes of this paper, we will consider 
more simplified
staircase tableaux, namely $\a/\b$-staircase tableaux as
introduced in \cite{HJ}, which are staircase tableaux limited to the
symbols $\a$ and $\b$. The set $\overline{\mcS}_{n} \subset
\mcS_{n}$ denotes the set of all such staircase tableaux. Since the symbols $\a$ and $\gamma$ follow the same rules in the definition, as do $\b$ and $\delta$, any $S \in \mcS_{n}$ can be obtained from an $S^{'} \in \overline{\mcS}_{n}$ by replacing the appropriate $\a$'s with $\gamma$'s and $\b$'s with $\delta$'s. 

The generating function of $\a/\b$-staircase tableaux is:
\[
Z_{n}(\a, \b) := \sum_{S \in \overline{\mcS}_{n}} wt(S) = Z_{n}(\a, \b, 0, 0)
\]
and it follows from $(\ref{Z4})$ that it 
is simply:
\[
Z_{n}(\a, \b) = \a^{n}\b^{n}(a + b)^{\overline{n}}= \a^{n}\b^{n}(a + b+n-1)_{n}
\]
where $a:= \a^{-1}$ and $b:=\b^{-1}$, a notation that will be used frequently throughout this paper,  $(x)^{\overline{n}}$ is the rising factorial of $x$, i.e. $(x)^{\overline{n}} = x(x+1)\cdots(x+n-1)$, and $(x)_n=x(x-1)\dots(x-(n-1))$ is the falling factorial of $x$.

We wish to consider random staircase tableaux as was done in
\cite{D-HH} but it suffices to study random $\a/\b$-staircase
tableaux 
as was done in
\cite{HJ}. All of our results for random $\a/\b$-staircase
tableaux can be extended to random staircase tableaux with all four
parameters, $\a, \gamma, \b, \delta$. This is done by
randomly
replacing each $\a$ with $\gamma$ with probability
$\frac{\gamma}{\a + \gamma}$ and similarly, each $\b$ with
$\delta$ with probability $\frac{\delta}{\b + \delta}$
independently for each occurrence. Notice that $Z_{n}(\a, \b, \gamma, \delta) = Z_{n}(\a + \gamma, \b + \delta)$. We also allow all parameters to be arbitrary positive real numbers,  i.e. $\a, \b \in (0, \infty)$, allowing $\a = \infty$ by fixing $\b$ and taking the limit or vice versa, or $\a=\b=\infty$ by taking the limit. Several such cases were considered in the literature and we refer to \cite[Section~3]{HJ} for examples and discussion. 

The following is a formal definition of a weighted random staircase  introduced  in \cite{HJ}, although we prefer to use a different notation:

\begin{definition}
For all $n \geq 1$, $\a, \b \in [0, \infty)$ with $(\a,
\b) \neq (0,0)$, we consider a family of probability measures $\mbP_{n,\a,\b}$  on $\overline{\mcS}_{n}$ defined by:
\[
\mathbb{P}_{n,\a,\b} (S) = \frac{wt(S)}{Z_{n}(\a,\b)} = \frac{\a^{N_{\a}}\b^{N_{\b}}}{Z_{n}(\a,\b)},\quad S \in \overline{\mcS}.
\]
We denote by $\overline{\mcS}_{n,\a,\b}$ the probability space $(\overline\mcS_n,\mathbb{P}_{n,\a,\b})$ and we call  $S\in\overline\mcS_{n,\a,\b}$
 a random weighted staircase tableau (with weights $\a$, $\b$). That is, 
$S$ is an
$\a/\b$-staircase tableau in $\overline{\mcS}_{n}$ chosen according to the 
 probability distribution $\mbP_{n,\a,\b}$.
\end{definition}
As in  \cite{HJ}  
 $\a$ and $\b$ are used  in two different meanings, as fixed symbols in the tableaux  and as the values of the parameters, but that should not cause any confusion.

We identify the boxes of a tableau by a row and column number (as in a matrix) and we write $y_{i,j}$ to indicate that a box $(i,j)$  contains a symbol $y$, where $y$ is $\a$, $\b$, or $0$.
Using the above definition, Hitczenko and Janson presented the distribution
of a given box in a random staircase tableau.  
If a box is  on the main diagonal, its distribution, in our notation,  is (see
\cite[Theorem~7.1]{HJ}):
\begin{equation}\label{1BOXD}
\mathbb{P}_{n, \a, \b}(\a_{i, n+ 1 - i}) = \frac{n-i+b}{n + a + b - 1},\quad \mathbb{P}_{n, \a, \b}(\b_{i, n+ 1 - i}) = \frac{a+i-1}{n + a + b - 1}.
\end{equation}
Since a box on the main diagonal is never empty,  
the second formula follows trivially from the first. This is no longer true for  boxes not on the main diagonal. For such boxes the distribution of non--zero symbols  is (see
\cite[Theorem~7.2]{HJ}):
\begin{eqnarray}  && 
\label{1BOXA}
\mathbb{P}_{n, \a, \b}(\a_{i, j}) = \frac{j - 1 + b}{(i + j + a + b - 1)_2
}, 
\qquad\mathbb{P}_{n, \a, \b}(\b_{i, j})  = \frac{i - 1 + a}{(i + j + a + b - 1)_2
}, 
\end{eqnarray}
and the remainder is for $\mbP_{n,\a,\b}(0_{i,j})$.

For an arbitrary $S \in \overline{\mcS}_{n}$ and an arbitrary box $(i,
j)$ in S, define $S[i,j]$ to be the  subtableau in
$\overline{\mcS}_{n-i-j+2}$ obtained by deleting the first $i-1$ rows
and $j-1$ columns, see \cite{HJ}. 
The following statement was proven in \cite[Theorem~6.1]{HJ} and is a useful tool in our results: If 
$S\in\overline\mcS_{n,\a,\b}$ then $S[i,j]\in\overline\mcS_{n-i-j+2,\ha,\hb}$  with  $\widehat{a} = a+i-1$  and  $\widehat{b}=b+j-1$. This means that   if $\widetilde S\in\overline{\mcS}_{n-i-j+2}$ then 
\begin{equation} \label{SUBT}
\mbP_{n,\alpha, \beta}(S\in\overline{\mcS}_n:\ S[i, j] =\widetilde S)=\mbP_{n-i-j+2, \widehat \a, \widehat \b}(\widetilde S), 
\end{equation}

To show  the convergence to a Poisson random variable we will rely on the method of (factorial) moments.  When the random variables are the sums of indicators the condition takes the following form
\begin{lemma}\label{fac_mom} Let $Y=\sum_{j=1}^mI_j$, where $(I_j)$ are indicator random variables.  Then, for $r\ge1$, 
\[\mathbb{E}(Y)_{r} =
r! \left( \sum_{1 \leq j_{1} < ... < j_{r} \leq m } \mathbb{P}(I_{j_{1}} \cap \ldots \cap I_{j_{r}}) \right), 
\]
where $\mathbb E(X)_r=\mathbb EX(X-1)\dots(X-(r-1))$ is the $r^{th}$  factorial moment.
\end{lemma}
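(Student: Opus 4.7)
The plan is to establish the pointwise random-variable identity
\[ (Y)_r \;=\; \sum_{\substack{(j_1,\dots,j_r)\\ \text{distinct}}} I_{j_1} I_{j_2}\cdots I_{j_r}, \]
where the sum runs over ordered $r$-tuples of distinct indices drawn from $\{1,\dots,m\}$, and then to conclude by taking expectations and grouping ordered tuples according to the unordered subsets they form.

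First I would verify this identity pointwise. Fix an outcome $\omega$ and set $J(\omega)=\{j:I_j(\omega)=1\}$, so that $Y(\omega)=|J(\omega)|$. The left-hand side equals $|J|(|J|-1)\cdots(|J|-r+1)$, which is exactly the number of ordered $r$-tuples of distinct elements chosen from $J$. On the right-hand side, the product $I_{j_1}(\omega)\cdots I_{j_r}(\omega)$ equals $1$ precisely when all of $j_1,\dots,j_r$ belong to $J$, and is $0$ otherwise; summing over ordered $r$-tuples of distinct indices therefore counts the same quantity. Hence both sides agree at every $\omega$. Alternatively one can argue by induction on $r$, using the recursion $(Y)_{r+1}=(Y)_r(Y-r)$ together with the idempotence $I_j^2=I_j$ to absorb the diagonal terms.

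Next, taking expectations, using linearity, and observing that $\mathbb{E}[I_{j_1}\cdots I_{j_r}]=\mathbb{P}(I_{j_1}\cap\cdots\cap I_{j_r})$ gives
\[ \mathbb{E}(Y)_r \;=\; \sum_{\substack{(j_1,\dots,j_r)\\ \text{distinct}}} \mathbb{P}(I_{j_1}\cap\cdots\cap I_{j_r}). \]
Finally, each unordered subset $\{j_1<\cdots<j_r\}\subset\{1,\dots,m\}$ corresponds to exactly $r!$ ordered $r$-tuples of distinct indices, all of which contribute the same intersection probability (the joint event does not depend on the listing order). Collecting these gives the factor $r!$ and produces the claimed formula. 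This is a standard textbook identity, so there is no genuine obstacle; the only care needed is the bookkeeping that turns the sum over ordered distinct tuples into $r!$ times the sum over strictly increasing tuples.
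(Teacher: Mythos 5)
Your proof is correct, but it takes a different route from the paper. You prove the pointwise identity $(Y)_r=\sum I_{j_1}\cdots I_{j_r}$, the sum running over ordered $r$-tuples of distinct indices, by observing that both sides count ordered $r$-tuples of distinct elements of the random set $J=\{j: I_j=1\}$; you then take expectations and fold the ordered sum into $r!$ times the sum over increasing tuples. The paper instead works with the probability generating function: it writes $z^Y=\prod_{j=1}^m\bigl(1+I_j(z-1)\bigr)$, expands this product as a polynomial in $(z-1)$ whose coefficient of $(z-1)^r$ is precisely $\sum_{j_1<\dots<j_r}\prod_k I_{j_k}$, takes expectations, and extracts $\mathbb{E}(Y)_r$ as the $r$th derivative of $\mathbb{E}(z^Y)$ at $z=1$. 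The two arguments are essentially dual ways of organizing the same combinatorics: yours is more elementary and self-contained (a direct counting identity plus linearity of expectation, with no differentiation step), while the paper's generating-function expansion handles all orders $r$ simultaneously in one identity and makes the appearance of the increasing-tuple sum immediate. Both are complete and rigorous; there is no gap in your argument.
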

\begin{proof}
We have
\begin{eqnarray*} 
z^Y &=& z^{\sum^{m}_{j=1} I_j} = \prod^{m}_{j=1} z^{I_j} = \prod^{m}_{j=1} (1 + (z-1))^{I_ j} = \prod^{m}_{j=1}(1 + I_j(z-1)) \\&=&
1 + \sum^{m}_{r=1} \left( \sum_{1 \leq j_{1} < ... < j_{r} \leq m} \left( \prod^{r}_{k=1} I_{j_k} \right) \right) (z - 1)^{r} \\&=& 1 + \sum^{m}_{r=1} (z - 1)^{r} \left( \sum_{1 \leq j_{1} < ... < j_{r} \leq m} \left( \prod^{r}_{k=1} I_{j_{k}} \right) \right).
\end{eqnarray*}
Thus, 
\[
\mathbb{E}(z^{Y}) = 1 + \sum^{m}_{r=1} (z - 1)^{r} \left( \sum_{1 \leq
  j_{1} < ... < j_{r} \leq m} 
\mathbb{P}(I_{j_{1}} \cap \ldots \cap
I_{j_{r}}) \right). 
\]
Hence
\[\mathbb{E}(Y)_{r} =
\frac{d^r}{dz^r}(\mathbb{E}z^{Y}) |_{z=1} =
r! \left( \sum_{1 \leq j_{1} < ... < j_{r} \leq m} \mathbb{P}(I_{j_{1}} \cap \ldots \cap I_{j_{r}}) \right). 
\]
\end{proof} 
Thus, to apply the above lemma we will need to find the  joint distribution of symbols in given boxes on the diagonal of a tableau. We will obtain the exact formulas for the second main diagonal (see Theorems~\ref{rSymbols}   and \ref{rNZero} in the next section) and asymptotic for the third main diagonal (see Lemmas~\ref{rsymbols} and  \ref{rsymbolsx} below). It should be noted that the joint distribution of symbols for the main diagonal was given in \cite[Theorem~7.6]{HJ}, but since boxes on the main diagonal  cannot be empty it is a different issue.

\section{Distribution of parameters along the second main diagonal} \label{SAB}

The following two lemmas consider the probability of an arbitrary
staircase tableau in $\overline{\mcS}_n$ that is conditioned on having an $\alpha$ or a $\beta$ in the box $(n-1, 1)$. The statements follow almost immediately from the definition of a staircase tableau, but will be used frequently throughout the paper. 
\begin{lemma}
\label{SWCorner}If $S\in\overline\mcS_{n,\a,\b}$  is conditioned on having $\a$ in box $(n-1,1)$, then 
 the subtableau $S_{n, \a, \b} [1, 3]\in\overline\mcS_{n-2, \a, \b}$. In other words, for $S\in\overline\mcS_{n,\a,\b}$
 \[
\mbP_{n,\a,\b}(S\ |\ \a_{n-1,1})=\mbP_{n-2,\a,\b}(S[1,3]).\]
\end{lemma}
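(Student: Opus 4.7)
The plan is a direct Bayes computation: first determine which boxes of $S$ are forced to take specific values under the conditioning event $\a_{n-1,1}$, then use the explicit one-box probability~\eqref{1BOXA} together with the product formula~\eqref{Z4} to identify the conditional law of what remains with $\mbP_{n-2,\a,\b}$.

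The crux is the deterministic analysis. Conditioning on $\a_{n-1,1}$ propagates through the rules of Definition~\ref{DEFST} as follows. Rule~(2) applied to the $\a$ at $(n-1,1)$ empties the boxes $(1,1),\dots,(n-2,1)$ in column~$1$. The main-diagonal box $(n-1,2)$ must contain a symbol; if it were $\b$ then rule~(3) would force $(n-1,1)$ to be empty, a contradiction, so $(n-1,2)=\a$. Rule~(2) applied to this $\a$ in turn empties the boxes $(1,2),\dots,(n-2,2)$ of column~$2$. Finally, the main-diagonal box $(n,1)$ must contain a symbol, and it cannot be $\a$ (else rule~(2) would empty $(n-1,1)$), so $(n,1)=\b$. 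Hence columns $1$ and $2$ of $S$ are completely determined, contributing weight $\a^{2}\b$, and the remaining portion of $S$ is exactly the subtableau $S[1,3]$. Moreover any $\widetilde S\in\overline{\mcS}_{n-2}$ is realizable, since the only nonempty symbol sitting adjacent to columns $3,\dots,n$ is the $\a$ at $(n-1,2)$, which imposes no new constraint on boxes to its right or below (and the row below has length~$0$).

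Given this, Bayes' rule yields, for the unique $S\in\overline{\mcS}_{n}$ with $\a_{n-1,1}$ and $S[1,3]=\widetilde S$,
\[
\mbP_{n,\a,\b}(S\mid\a_{n-1,1})=\frac{\a^{2}\b\cdot wt(\widetilde S)}{Z_{n}(\a,\b)}\cdot\frac{1}{\mbP_{n,\a,\b}(\a_{n-1,1})}.
\]
Substituting $\mbP_{n,\a,\b}(\a_{n-1,1})=b/((n+a+b-1)(n+a+b-2))$ from~\eqref{1BOXA}, using $\b=1/b$, and invoking the two-term telescope
\[
\frac{Z_{n}(\a,\b)}{Z_{n-2}(\a,\b)}=\a^{2}\b^{2}(a+b+n-1)(a+b+n-2)
\]
obtained from~\eqref{Z4}, collapses the right-hand side to $wt(\widetilde S)/Z_{n-2}(\a,\b)=\mbP_{n-2,\a,\b}(\widetilde S)$, which is the asserted identity.

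The computation itself is mechanical; the only place demanding real care is the deterministic step, where one must verify that the conditioning leaves \emph{no} residual randomness in columns $1$ and $2$. Once this is in hand, the fact that the parameters of the subtableau are preserved as the original $(\a,\b)$, rather than the shifted pair $(\a,1/(b+2))$ one would get by naively applying~\eqref{SUBT}, is accounted for by the $\a^{2}\b^{2}$ factor arising from the three forced symbols (combined with the reciprocal convention $\b=1/b$), which exactly cancels the mismatch between $Z_{n}$ and $Z_{n-2}$.
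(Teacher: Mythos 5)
Your proof is correct and follows essentially the same route as the paper: the heart of both arguments is the deterministic propagation of the conditioning ($\a_{n-1,1}$ forces $\a_{n-1,2}$, $\b_{n,1}$, and empty boxes elsewhere in columns $1$ and $2$), after which the remainder $S[1,3]$ is an unconstrained tableau of size $n-2$. The paper leaves the normalization implicit (the conditional law is proportional to $wt(S[1,3])$, hence equals $\mbP_{n-2,\a,\b}$), whereas you verify it explicitly via \eqref{1BOXA} and \eqref{Z4}; your computation checks out.
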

\begin{proof}
If $S\in\overline \mcS_{n}$ is a staircase tableau such that  $\a_{n-1, 1}$, then  $\beta_{n, 1}$ and $\a_{n-1, 2}$
by the rules of a staircase tableau. The first and second column are otherwise empty by those same rules. The remainder, $S[1, 3]$,  is an arbitrary staircase tableau of size $n-2$. Therefore, the lemma follows.
\end{proof}

\begin{lemma}
\label{BSWCorner} Let $(S)_{i,j}$ be a subtableau of $S$
with the $i$th row and the $j$th column removed. 
If $S\in\overline\mcS_{n, \alpha, \beta}$ is conditioned on having $\beta$ in box $(n-1,
1)$, then the subtableau $(S)_{n-1, 2}$ 
 is random tableau in $\overline\mcS_{n-1,\a,\b}$  conditioned on having a $\b$ in the
$(n-1, 1)$ box. In other words
\[\mbP_{n,\a,\b}(S\ |\
\b_{n-1,1})=\mbP_{n-1,\a,\b}((S)_{n-1,2}\ |\ \b_{n-1,1}).\]
\end{lemma}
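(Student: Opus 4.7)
The plan is to show, by a straightforward forcing argument, that conditioning on $\b_{n-1,1}$ deterministically fills three specific boxes and empties the rest of column $2$, so that removing row $n-1$ and column $2$ leaves a genuine staircase tableau of size $n-1$ whose weight differs from the weight of $S$ by a constant factor $\a\b$.

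First I would deduce the forced entries. Since $(n,1)$ is on the main diagonal it must contain a symbol, and if it were $\a$ then Rule (2) of Definition~\ref{DEFST} would require $(n-1,1)$ to be empty, contradicting $\b_{n-1,1}$; hence $(n,1)=\b$.  By the symmetric argument on the diagonal box $(n-1,2)$: if it were $\b$, Rule (3) would force $(n-1,1)$ to be empty; hence $(n-1,2)=\a$.  Then Rule (2) applied to this $\a$ at $(n-1,2)$ empties all the boxes $(i,2)$ for $i=1,\dots,n-2$. Note that $\b_{n-1,1}$ itself imposes no further constraints (there is no box to its left, and only $\a$'s impose restrictions on the boxes above them).

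Next, I would verify that the remaining boxes, namely those of $S$ outside row $n-1$ and column $2$, form a valid staircase tableau $(S)_{n-1,2}$ of size $n-1$ under the natural relabelling (new row $i=$ old row $i$ for $i\le n-2$, new row $n-1=$ old row $n$; new column $1=$ old column $1$, new column $j=$ old column $j+1$ for $j\ge 2$).  Under this relabelling, the new $(n-1,1)$ box is the old $(n,1)$ box, which we showed equals $\b$.  Checking the three staircase rules for $(S)_{n-1,2}$ is a routine verification: any $\a$ in the new tableau has empty boxes above it because it did in $S$; any $\b$ in new row $i\le n-2$ sits to the right of boxes which were empty in $S$ (the removed old column $2$ entry being empty whenever $i\le n-2$), so the same holds after removal; and every diagonal box of the new tableau came from an old diagonal box and hence carries a symbol.

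Finally, the probability identity follows from bookkeeping the weights. Since conditioning on $\b_{n-1,1}$ contributes exactly one $\b$ at $(n-1,1)$, one $\b$ at $(n,1)$ and one $\a$ at $(n-1,2)$ beyond what $(S)_{n-1,2}$ records (the forced $\b_{n,1}$ is itself the new $(n-1,1)$ entry already counted in $\mathrm{wt}((S)_{n-1,2})$), we obtain
\[
wt(S)=\a\b\cdot wt((S)_{n-1,2})
\]
for every $S$ with $\b_{n-1,1}$.  Consequently, for any $\widetilde S\in\overline\mcS_{n-1}$ with $\b_{n-1,1}$,
\[
\mbP_{n,\a,\b}\bigl((S)_{n-1,2}=\widetilde S\,\bigm|\,\b_{n-1,1}\bigr)
=\frac{\a\b\cdot wt(\widetilde S)}{\sum_{T:\,\b_{n-1,1}}\a\b\cdot wt(T)}
=\mbP_{n-1,\a,\b}(\widetilde S\,\mid\,\b_{n-1,1}),
\]
which is the claim.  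The only nontrivial step is verifying the forcing in the first paragraph; once those four boxes are pinned down the remainder is just a repackaging of weights, with the key cancellation being that the two forced $\b$'s and the forced $\a$ contribute a multiplicative factor independent of $\widetilde S$.
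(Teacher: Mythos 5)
Your proof is correct and follows essentially the same route as the paper's: you deduce the forced entries $\b_{n,1}$ and $\a_{n-1,2}$, note that the rest of column $2$ is empty, and observe that what remains is an arbitrary size-$(n-1)$ tableau conditioned on $\b$ in its corner box. You simply make explicit the weight bookkeeping (the constant factor $\a\b$) that the paper leaves implicit.
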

\begin{proof}
If $S\in\overline{\mcS}_{n}$ is a staircase tableau such that  $\beta_{n-1, 1}$, then  $\a_{n-1, 2}$ and $\beta_{n,1} $ by the rules of a
staircase tableau. The second column is otherwise empty by those same
rules. The $n$th row  only has one box, $(n,1)$, which must be a
$\beta$. The remainder is an arbitrary staircase tableau of size $n-1$
conditioned to have a $\beta$ in box $(n-1,1)$. Therefore, the lemma follows.
\end{proof}
We are now ready to state and prove the results for the second main diagonal. We begin with the distribution of the number
of $\a$'s and we treat the number of non--empty boxes in the next subsection.

\subsection{The number of $\a$'s on the second main diagonal}
As our first result, the following is the distribution of boxes along the second main diagonal. 
In order to simplify notation, let $\a_j$  be the event $\a_{n-j,j}$ i.e. that  a box $(n-j,j)$ on the second diagonal and in the $j^{th}$ column contains an  $\alpha$.  

\begin{theorem}
\label{rSymbols}
Let $ 1 \leq j_{1} < ... < j_{r} \leq n - 1$. If 
\begin{equation}\label{2_diff}j_{k} \leq j_{k + 1} - 2, \hspace{2mm} \forall k = 1, 2, ..., r -
1\end{equation} 
then 
\begin{eqnarray*}&&\mathbb{P}_{n, \alpha, \beta}(\a_{j_1},\dots,\a_{j_{r}}) 
= \prod_{k=1}^{r}\frac{b + j_{r - k + 1} - 2r + 2k - 1}{(n+a+b - 2r + 2k -1)_2
}.\end{eqnarray*}
\noindent (For $r=1$, this is (\ref{1BOXA})). Otherwise, 
\[\mathbb{P}_{n, \alpha, \beta}(\a_{j_1},\dots, \a_{j_r})  = 0.\] 
\end{theorem}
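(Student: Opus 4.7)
The plan is to dispose of the impossibility claim first and then establish the product formula by induction on $n$, using two different reductions depending on whether $j_1 = 1$ or $j_1 \geq 2$.

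For the vanishing statement, I would argue directly from the tableau rules. Suppose $j_{k+1} = j_k + 1$. The event $\alpha_{j_k}$ places an $\alpha$ at $(n - j_k, j_k)$. Its main-diagonal neighbour $(n - j_k, j_k + 1)$ is nonempty by rule~4 and cannot be a $\beta$ (that would empty its left neighbour by rule~3 and contradict $\alpha_{j_k}$), hence must be an $\alpha$. Rule~2 then empties every cell above this $\alpha$ in column $j_k + 1$, and in particular $(n - j_k - 1, j_k + 1) = 0$; but this is precisely the cell that $\alpha_{j_{k+1}}$ demands to hold an $\alpha$, a contradiction.

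For the product formula I would induct on $n$, with base cases $r \leq 1$ handled by (\ref{1BOXA}) and the empty-product convention. In the inductive step I split on $j_1$. If $j_1 \geq 2$, every marked box lies strictly east of column~$1$, so each $\alpha_{j_k}$ is an event about the subtableau $S[1,2]$. By (\ref{SUBT}) this subtableau is an element of $\overline{\mcS}_{n - 1, \hat\alpha, \hat\beta}$ with $\hat a = a$ and $\hat b = b + 1$, and the original cell $(n - j_k, j_k)$ sits on the second main diagonal of $S[1,2]$ at column $j_k - 1$. Applying the inductive hypothesis with $n \mapsto n - 1$, $b \mapsto b + 1$, and columns $j_k - 1$, the parameter shift in $\hat b$ and the size shift exactly absorb the column shift, reproducing the stated formula verbatim. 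If $j_1 = 1$, I would condition on $\alpha_{n-1,1}$ and apply Lemma~\ref{SWCorner}, which says that the conditional distribution of $S[1,3]$ is $\mathbb{P}_{n - 2, \alpha, \beta}$ with the \emph{original} parameters unchanged. The events $\alpha_{j_2}, \ldots, \alpha_{j_r}$ then correspond to second-main-diagonal events at columns $j_k - 2$ of the $(n - 2)$-subtableau (the gap condition guarantees $j_2 - 2 \geq 1$ and preserves the spacing). The inductive hypothesis supplies the $k = 1, \ldots, r - 1$ factors of the target product, and multiplying by $\mathbb{P}(\alpha_{n - 1, 1}) = b / (n + a + b - 1)_2$ (from (\ref{1BOXA})) contributes exactly the missing $k = r$ factor.

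The main obstacle is the combinatorial bookkeeping of pairing each value of $j_1$ with the correct reduction: (\ref{SUBT}) drops one column and shifts $b$ by $+1$, while Lemma~\ref{SWCorner} conditions on an event, drops two columns, and keeps $(a, b)$ unchanged. Both reductions are tailored so that the column-index shifts in the formula dovetail with the parameter shifts, and once the pairing is recognised the matching of the remaining factors is automatic.
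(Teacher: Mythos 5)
Your proof is correct and takes essentially the same route as the paper's: both arguments rest on the subtableau identity \eqref{SUBT} to normalize to the case $j_1=1$, on Lemma~\ref{SWCorner} to peel off the corner $\alpha$ and invoke the inductive hypothesis, and on the same rule-of-the-tableau contradiction (the forced symbol in the main-diagonal box between two adjacent second-diagonal $\alpha$'s) for the vanishing case. The only cosmetic difference is that you strip one column at a time and induct on $n$, whereas the paper passes to $S[1,j_1]$ in a single step, setting $\widehat{b}=b+j_1-1$, and inducts on $r$; composing your one-column reductions reproduces exactly that step.
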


\begin{proof} First note that when (\ref{2_diff}) fails there
 exists $j_{k}$ such that $j_{k} = j_{k+1} - 1$ and thus there must be two $\alpha$'s  in boxes side by side on the $(n-i, i)$ 
diagonal. But this is impossible by the rules of a staircase
tableau as  no symbol  can be put in the diagonal box
$(n-j_k,j_{k+1})$ adjacent to these two boxes. Therefore the
probability is $0$. 

Suppose now that (\ref{2_diff}) holds. We proceed by induction on $r$. Set
\begin{equation}\label{hats}
\hb^{-1} = \beta^{-1} + j_{1} - 1,\quad \hn:=n-(j_1-1),\quad \hj_l:=j_l-(j_1-1), \quad 1\le l\le r.\end{equation}
Then by \eqref{SUBT}
\[
\mbP_{n,\a,\b}(\a_{j_1},\dots,\a_{j_r})=\mbP_{\hn,\a,\hb}(\a_{\hj_1},\dots,\a_{\hj_r})
=\mathbb{P}_{\hn , \alpha, \hb}(\a_{\hj_2}, \dots,\alpha_{\hj_{r} }|\a_{\hj_1}) \cdot
\mathbb{P}_{\hn, \alpha, \hb}(\alpha_{\hj_1}).
\]
Since $\hj_1=1$, by Lemma \ref{SWCorner} and the induction hypothesis (applied with $\widetilde n:=\hn-2$, $\widetilde j_l:=\hj_{l+1}-2$, $1\le l\le r-1$)
\begin{eqnarray*} &&
\mathbb{P}_{\hn , \alpha, \hb}(\a_{\hj_2} ,\dots,\alpha_{\hj_{r}} \hspace{1mm} | \hspace{1mm} \ha_{j_1}) 
=
\mathbb{P}_{\hn - 2, \alpha, \hb}(\a_{\hj_{2} -2} ,\dots,\a_{\hj_{r} - 2})  \\&&\quad=
\prod_{k=1}^{r-1} \frac{\widehat{b} + \widetilde j_{(r-1)-k+1} - 2(r-1) + 2k-1}{(\widetilde n+a + \hbb  - 2(r-1) + 2k-1)_2}
 =
\prod_{k=1}^{r-1} \frac{b + j_{r-k+1} - 2r+ 2k-1}{(n + a + {b}  - 2r + 2k-1)_2},
\end{eqnarray*}
where, in the last step we used \eqref{hats}. 
By (\ref{1BOXA}) and \eqref{hats}:
\[
\mathbb{P}_{\hn , \alpha, \hb}(\alpha_{1}) = \frac{\hbb}{(\hn + a + \hbb - 1)_2}=\frac{b+j_1-1}{(n+a+b-1)_2}.
\]
Therefore,
\begin{eqnarray*} 
\mathbb{P}_{n, \alpha, \beta}(\a_{j_1},\dots,\alpha_{j_r}) 
&=&
\prod_{k=1}^{r} \frac{b + j_{r-k+1} - 2r + 2k - 1}{(n+a + b - 2r + 2k - 1)_2
}
\end{eqnarray*}
which proves the result.
\end{proof}

Our second main result of this section is the distribution of the number of $\alpha$'s
(and $\beta$'s) along the second main diagonal. The proof requires a  lemma.
\begin{lemma}
\label{LA}
Let 
\[J_{r,m}:=\{ 1 \leq j_{1} < ... < j_{r} \leq m:\ j_{k} \leq j_{k + 1}
- 2, \hspace{2mm} \forall k = 1, 2, ..., r - 1\}.\] 
Then 
\[\sum_{J_{r, m}} \left(\prod_{k=1}^{r} j_{r-k+1}\right)
=\frac{(m+1)_{2r}}{2^{r}r!}.\]
\end{lemma}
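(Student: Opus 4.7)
The plan is to argue by induction on $r$, peeling off the largest index $j_r$ and reducing to the case $r-1$. Observe first that $\prod_{k=1}^{r} j_{r-k+1}=j_1j_2\cdots j_r$, since reversing the order of multiplication changes nothing. The base case $r=1$ is the familiar identity $\sum_{j=1}^{m}j=m(m+1)/2=(m+1)_2/2$.

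For the inductive step I would condition on the value $j_r=l$. The gap condition $j_{r-1}\le l-2$ says exactly that $(j_1,\dots,j_{r-1})\in J_{r-1,l-2}$, while the chain $j_1\ge1,\ j_2\ge 3,\dots,\ j_k\ge 2k-1$ forces $l\ge 2r-1$. Combined with the inductive hypothesis, this yields
\[
\sum_{J_{r,m}} j_1\cdots j_r \;=\; \sum_{l=2r-1}^{m} l\sum_{J_{r-1,l-2}} j_1\cdots j_{r-1} \;=\; \frac{1}{2^{r-1}(r-1)!}\sum_{l=2r-1}^{m} l\cdot(l-1)_{2r-2}.
\]

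The remaining step is to recognize the telescoping $l\cdot(l-1)_{2r-2}=(l)_{2r-1}$ and then apply the falling-factorial form of the hockey-stick identity
\[
\sum_{l=k}^{m}(l)_{k} \;=\; \frac{(m+1)_{k+1}}{k+1},
\]
which is equivalent to $\sum_{l=0}^{m}\binom{l}{k}=\binom{m+1}{k+1}$. Taking $k=2r-1$ gives $\sum_{l=2r-1}^{m}(l)_{2r-1}=(m+1)_{2r}/(2r)$, and combining with the prefactor $1/(2^{r-1}(r-1)!)$ produces exactly $(m+1)_{2r}/(2^r r!)$, as required.

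I do not anticipate any real obstacle; the argument is bookkeeping once the reduction by conditioning on $j_r$ is in place. The two points worth being careful about are the correct lower range $l\ge 2r-1$ of the summation and the substitution $(l-2)+1=l-1$ when invoking the inductive hypothesis, both of which are routine. The hockey-stick identity is entirely standard.
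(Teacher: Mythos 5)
Your proof is correct and follows essentially the same route as the paper's: induction on $r$, summing over the largest index $j_r=l\ge 2r-1$ with $(j_1,\dots,j_{r-1})\in J_{r-1,l-2}$, recognizing $l\cdot(l-1)_{2r-2}=(l)_{2r-1}$, and finishing with the falling-factorial hockey-stick identity. No substantive differences.
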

\begin{proof}
By induction on $r$.
When $r=1$:
\[\sum_{J_{1,m}} \left( \prod_{k=1}^{1} j_{1-k+1} \right) = \sum_{j_1=1}^m j_{1} = \frac{(m+1)m}{2}.
\]
Assume the statement holds for $r-1$. Then:
\begin{eqnarray*}
\sum_{J_{r,m}} \left(\prod_{k=1}^{r} j_{r-k+1} \right) &=&
\sum_{j_r=2r-1}^{m} j_r\left( \sum_{J_{r-1,j_r-2}}  \prod_{k=2}^{r} j_{r-k+1} \right) \\&=&
\sum_{j_r=2r-1}^{m} j_{r}
\frac{(j_r-1)_{2(r-1)}}{2^{r-1}(r-1)!}\\&=&\frac1{2^{r-1}(r-1)!}\sum_{j_{r}=2r-1}^m
( j_{r})_{2r-1}
\end{eqnarray*}
where the second equality is by the induction hypothesis.
Since
\[\sum_{j_r=2r-1}^{m} (j_{r})_{2r-1}=\sum_{j_{r}=0}^m (
j_{r})_{2r-1}\]
the lemma will be proved once we verify that 
\[\sum_{j=0}^m(j)_t
=\frac{(m+1)_{t+1}}{t+1},\]
for any non-negative integer $t$ (and apply it with $t=2r-1$). 
Using the identity 
\[\sum_{j=0}^m{j\choose t}={m+1\choose t+1}\]
(see, e.g. \cite[Formula~(5.10)]{GKP})   we see that 
\[\sum_{j=0}^m(j)_t=
\sum_{j=0}^{m} \frac{j!}{(j-t)!}=t!\sum_{j=0}^m{j\choose t}
=t!{m+1\choose  t+1}
=t!\frac{(m+1)_{t+1}}{(t+1)!}=\frac{(m+1)_{t+1}}{m+1},\]
as asserted.
 \end{proof}

Finally, define $A_{n}$ and $B_{n}$ to be the number of $\alpha$'s and
$\beta$'s on the second main diagonal, 
i.e. 
$A_{n} := \sum^{n-1}_{j=1} I_{\alpha_j}$ and
$B_{n} := \sum^{n-1}_{j=1} I_{\beta_j}$, where  $\b_j$ means $\b_{n-j,j}$. Then,
the asymptotic distribution of $A_n$ and $B_n$ is given in the following theorem and corollary.
\begin{theorem}
\label{DT}
Let $Pois(\lambda)$ be a Poisson random variable with parameter
$\lambda$. Then, as $n\to\infty$,
\begin{equation}
A_{n} \stackrel{d}{\rightarrow} Pois \left( \frac{1}{2} \right).
\end{equation}
\end{theorem}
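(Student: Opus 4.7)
The plan is to apply the method of factorial moments. For $X \sim \mathrm{Pois}(\lambda)$, the factorial moments are $\mathbb{E}(X)_r = \lambda^r$, and since a Poisson distribution is determined by its moments, convergence $\mathbb{E}(A_n)_r \to (1/2)^r$ for every fixed $r \ge 1$ will imply $A_n \xrightarrow{d} \mathrm{Pois}(1/2)$.

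I would first apply Lemma \ref{fac_mom} with $I_j = I_{\alpha_j}$ and $m = n-1$, then substitute the joint probability formula from Theorem \ref{rSymbols}, noting that the summand vanishes outside $J_{r,n-1}$. This yields
$$\mathbb{E}(A_n)_r = r! \sum_{(j_1,\ldots,j_r) \in J_{r,n-1}} \prod_{k=1}^{r} \frac{b + j_{r-k+1} - 2r + 2k - 1}{(n+a+b - 2r + 2k - 1)_2}.$$
The denominator does not depend on the $j_k$'s and is readily seen to equal $n^{2r}(1+O(1/n))$, so it can be factored out of the summation.

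Next I would analyze the numerator sum. Writing each factor as $j_{r-k+1} + c_k$ with $c_k := b - 2r + 2k - 1$ a constant (in both $n$ and the $j_\ell$'s), and expanding into $2^r$ monomials, the top-degree monomial $\prod_{k=1}^r j_{r-k+1}$ is handled precisely by Lemma \ref{LA} with $m = n-1$: its sum over $J_{r,n-1}$ equals $n_{2r}/(2^r r!) \sim n^{2r}/(2^r r!)$. Every lower-degree monomial has degree $s \le r-1$ in the $j$'s, hence is $O(n^s)$ on the summation domain, and since $|J_{r,n-1}| = O(n^r)$ its total contribution is $O(n^{r+s}) = o(n^{2r})$. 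Combining these estimates,
$$\mathbb{E}(A_n)_r = r! \cdot \frac{n^{2r}/(2^r r!) + o(n^{2r})}{n^{2r}(1 + o(1))} \longrightarrow \frac{1}{2^r},$$
which is the $r$th factorial moment of $\mathrm{Pois}(1/2)$, proving the theorem.

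The only step that requires care is the error bookkeeping for the lower-order monomials in the numerator, but this is a routine order-of-magnitude estimate rather than a genuine obstacle: the crucial combinatorial content is already packaged in Lemma \ref{LA}, and the fact that the denominator is independent of the summation indices makes the asymptotic analysis essentially mechanical.
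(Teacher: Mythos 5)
Your proposal is correct and follows essentially the same route as the paper: the method of factorial moments combined with Lemma \ref{fac_mom}, the joint probabilities from Theorem \ref{rSymbols}, and the combinatorial identity of Lemma \ref{LA}. The only difference is that you spell out the error bookkeeping (factoring the denominator as $n^{2r}(1+O(1/n))$ and bounding the $2^r-1$ lower-degree monomials by $O(n^{2r-1})$) where the paper compresses this into a single ``$\approx$'' step.
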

\begin{proof}
By \cite[Theorem~20, Chapter~1]{B} it suffices to show that the $r$th
factorial moment of 
$A_{n}$ 
satisfies:
\begin{equation}
\mathbb{E}(A_{n})_{r}\to\left(\frac12\right)^r\hspace{5mm}\mbox{as\ }\n\to\infty.
\end{equation}
By Lemma~\ref{fac_mom}, Theorem~\ref{rSymbols}, and Lemma~\ref{LA}
\begin{eqnarray*} 
\mathbb{E}(A_{n})_{r}& =& 
r! 
\sum_{J_{r,n-1}} \left( \prod_{k=1}^{r} \frac{b + j_{r-k+1} - 2r + 2k - 1}{(n+a+b - 2r + 2k - 1)_2
} \right) 
\approx
r!\sum_{J_{r, n - 1}} \left( \prod_{k=1}^{r} \frac{j_{r-k+1}}{n^{2}}
\right) \\&=& \frac{r!}{n^{2r}} \frac{(n)_{2r}}{2^{r}r!} \rightarrow
\left(
 \frac{1}{2} \right)^{r},\quad\mbox{as\ }n \rightarrow \infty. 
\end{eqnarray*}
\end{proof} 
\begin{corollary}
\label{BC}
The $r$th factorial moment of the number $B_{n}$ of $\b$'s on the second main diagonal of a random staircase tableau of size $n$ satisfies:
\begin{equation}
\mathbb{E}(B_{n})_{r}\to\left(\frac12\right)^r\quad\mbox{as\ } n\to\infty.
\end{equation}
Furthermore,
\begin{equation}
B_{n} \stackrel{d}{\rightarrow} Pois \left( \frac{1}{2} \right)\quad\mbox{as\ } n\to\infty.
\end{equation}\end{corollary}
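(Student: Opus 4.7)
The plan is to deduce this corollary directly from Theorem~\ref{DT} by invoking the natural row/column swap involution $\iota$ on $\overline{\mcS}_n$ introduced in Section~\ref{DN}: $\iota$ transposes the Young diagram and simultaneously interchanges the symbols $\a$ and $\b$. First I would observe that $\iota$ sends the box $(n-j,j)$ to $(j,n-j)$, and since $\{(n-j,j):1\le j\le n-1\}=\{(j,n-j):1\le j\le n-1\}$, the second main diagonal is set-wise invariant under $\iota$. Because $\iota$ also exchanges $\a$ and $\b$, each $\b$ on the second diagonal of $S$ becomes an $\a$ on the second diagonal of $\iota(S)$, giving the pointwise identity $B_n(S)=A_n(\iota(S))$ for every $S\in\overline{\mcS}_n$.

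Next I would verify that $\iota$ pushes the measure $\mbP_{n,\a,\b}$ forward to $\mbP_{n,\b,\a}$. This follows from $N_\a(\iota(S))=N_\b(S)$ and $N_\b(\iota(S))=N_\a(S)$, together with the symmetry $Z_n(\a,\b)=Z_n(\b,\a)$ that is immediate from \eqref{Z4}. Combined with the identity $B_n(S)=A_n(\iota(S))$, this shows that under $\mbP_{n,\a,\b}$ the random variable $B_n$ has exactly the same distribution, and in particular the same factorial moments, as $A_n$ under $\mbP_{n,\b,\a}$.

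With the symmetry in hand, both assertions of the corollary drop out by applying Theorem~\ref{DT} with the roles of $\a$ and $\b$ exchanged, since that theorem was proved for arbitrary positive $\a,\b$. I do not anticipate a real obstacle here: the only items to double-check are that $\iota$ is a well-defined bijection on $\overline{\mcS}_n$ (a consequence of the mirror symmetry between the column constraints on $\a$ and the row constraints on $\b$ in Definition~\ref{DEFST}) and the set-wise invariance of the second main diagonal already noted, both of which are elementary. An entirely parallel alternative would be to mimic the proof of Theorem~\ref{DT} directly, first establishing a $\b$-analog of Theorem~\ref{rSymbols} via Lemma~\ref{BSWCorner} in place of Lemma~\ref{SWCorner}, but this seems strictly more work than exploiting the involution.
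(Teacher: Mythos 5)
Your proposal is correct and is exactly the symmetry argument the paper invokes (its proof of Corollary~\ref{BC} is the one-line ``this follows by symmetry''); you have simply written out the details of the involution, the set-wise invariance of the second main diagonal, and the pushforward of $\mbP_{n,\a,\b}$ to $\mbP_{n,\b,\a}$, all of which check out. Since Theorem~\ref{DT} holds for arbitrary $\a,\b$, swapping their roles completes the argument just as you describe.
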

\begin{proof}
This follows by symmetry, see Section~\ref{DEFST}.
\end{proof}
\begin{remark}
Theorem~\ref{DT} and Corollary~\ref{BC} hold regardless of the values
of $\a$ and $\b$ including the cases discussed earlier when
$\a=\infty$, $\b=\infty$, or $\a=\b=\infty$. As noted in
\cite[Examples~3.6 and 3.7]{HJ} these cases correspond to staircase
tableaux with the maximal number of $\a$'s (or $\b$'s) and the maximal
number of symbols, respectively. The same applies to Theorems~ \ref{thm:symb}, \ref{rsymbols}, and \ref{thm:3rdx} below.
\end{remark}

\subsection{Distribution of non-empty boxes on the second main diagonal}
Random variables  $A_n$ and  $B_n$ 
are not independent random variables, and the second main diagonal may
have empty boxes. Therefore, in order to completely describe the
second main diagonal, we must consider both symbols collectively. First, we present the distribution of non-empty boxes along the second main diagonal. We let $x_j$ denote the event that the box $(n-j,j)$ in the $j$th column and on the second main diagonal in non--empty. 
\begin{theorem}
\label{rNZero}
Let $ 1 \leq j_{1} < ... < j_{r} \leq n - 1$. If (\ref{2_diff}) holds 
then 
\[\mathbb{P}_{n, \a, \b}(x_{j_1}, \dots , x_{j_r})  = \prod_{k=1}^{r}\frac{1}{n + a + b - r + k - 1}.\]
(For $r=1$, this is obtained by adding the expressions in (\ref{1BOXA}).)
Otherwise, 
\[\mathbb{P}_{n, \a, \b}(x_{j_1} , \dots , x_{j_r})  = 0.\] 
\end{theorem}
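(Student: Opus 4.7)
My plan is to adapt the inductive strategy from the proof of Theorem~\ref{rSymbols}. The vanishing when the spacing condition~(\ref{2_diff}) fails is by exactly the same adjacency argument as before: two non-empty boxes in consecutive columns on the second diagonal would leave no admissible symbol for the main-diagonal box between them, since any $\a$ there would clash with the non-empty box below and any $\b$ with the non-empty box to the left. The base case $r=1$ follows from~(\ref{1BOXA}) by adding the two expressions, as indicated in the statement.

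For the inductive step I would first use~(\ref{SUBT}) to reduce to the case $j_1=1$ via the substitution $\hn=n-j_1+1$, $\hat b=b+j_1-1$, $\hj_l=j_l-j_1+1$, observing that the quantity $M:=n+a+b$ is preserved. Then, writing
\[
\mathbb{P}(x_1,x_{\hj_2},\ldots,x_{\hj_r})=\mathbb{P}(\a_{\hn-1,1},x_{\hj_2},\ldots,x_{\hj_r})+\mathbb{P}(\b_{\hn-1,1},x_{\hj_2},\ldots,x_{\hj_r}),
\]
I would handle the two terms separately. The $\a$-term is disposed of exactly as in Theorem~\ref{rSymbols}: conditionally on $\a_{\hn-1,1}$, Lemma~\ref{SWCorner} identifies the remaining tableau with $S[1,3]\in\overline{\mcS}_{\hn-2,\a,\hat b}$, the events $x_{\hj_l}$ for $l\ge 2$ become events $x_{\hj_l-2}$ on its second diagonal (spacing preserved), and the induction hypothesis combined with~(\ref{1BOXA}) yields
\[
\mathbb{P}(\a_{\hn-1,1},x_{\hj_2},\ldots,x_{\hj_r})=\frac{b+j_1-1}{(M-1)_{r+1}}.
\]

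The $\b$-term is the main obstacle. Lemma~\ref{BSWCorner} reduces it to the conditional probability $\mathbb{P}_{\hn-1,\a,\hat b}(x_{\hj_2-1},\ldots,x_{\hj_r-1}\mid\b_{\hn-1,1})$ in which $\b$ now sits on the \emph{main} diagonal of the smaller tableau---a conditioning that does not directly correspond to an instance of Theorem~\ref{rNZero}. I would overcome this by proving, as an auxiliary lemma, the identity
\[
\mathbb{P}_{N,\a,B}(x_{k_1},\ldots,x_{k_s}\mid\b_{N,1})=\frac{a+N-s-1}{(a+N-1)\,(N+a+B-2)_s},\qquad 2\le k_1<\cdots<k_s\le N-1
\]
(with $k_{l+1}\ge k_l+2$), whose striking and essential feature is position-independence. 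This auxiliary identity admits its own proof by induction on $s$, again splitting on $\a_{k_1}$ versus $\b_{k_1}$ inside the conditioned tableau and invoking~(\ref{SUBT}) together with Lemmas~\ref{SWCorner} and~\ref{BSWCorner}. Granted it, the $\b$-term evaluates to $\frac{a+n-r-j_1}{(M-1)_{r+1}}$, and summing gives
\[
\frac{(b+j_1-1)+(a+n-r-j_1)}{(M-1)_{r+1}}=\frac{M-r-1}{(M-1)_{r+1}}=\frac{1}{(M-1)_r}=\prod_{k=1}^{r}\frac{1}{n+a+b-r+k-1},
\]
as claimed.

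The hardest point is clearly the auxiliary identity: because a $\b$ on the main-diagonal SW corner forces no further boxes to be empty (nothing lies to its left or below in the same row or column), there is no clean peel-off of the conditioned tableau to a smaller staircase. The induction on $s$ must therefore handle the residual conditioning explicitly, and the position-independence of the right-hand side has to be verified by direct algebraic cancellation rather than extracted from a structural identification.
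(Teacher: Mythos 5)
Your overall architecture mirrors the paper's: the same adjacency argument when \eqref{2_diff} fails, the same reduction to $j_1=1$ via \eqref{SUBT}, the same split of the box $(n-1,1)$ into $\a$ and $\b$, and the same treatment of the $\a$-term via Lemma~\ref{SWCorner}. Your final assembly is also numerically correct: your auxiliary identity, specialized to $N=n-1$ and $s=r-1$, agrees with what the paper's computation produces, and the sum $\frac{(b+j_1-1)+(a+n-r-j_1)}{(M-1)_{r+1}}$ (with $M=n+a+b$) does telescope to the claimed product. The genuine gap is that the entire difficulty of the theorem has been deferred to the auxiliary identity for $\mbP_{N,\a,B}(x_{k_1},\dots,x_{k_s}\mid\b_{N,1})$, which you state but do not prove. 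Your sketched induction on $s$ does not obviously close: the move ``shift so that $k_1$ sits in the first column,'' which drives every other induction in this paper, is unavailable here because the conditioning event $\b_{N,1}$ itself lives in column $1$ and would be destroyed by deleting columns $1,\dots,k_1-1$ via \eqref{SUBT}; and, as you note yourself, a $\b$ in the SW corner forces no boxes to be empty, so there is no structural peel-off analogous to Lemmas~\ref{SWCorner} and \ref{BSWCorner}. As written, the hardest step is acknowledged rather than carried out.

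The paper dissolves this difficulty with a complementation trick you have not used: since $(n-1,1)$ is a main-diagonal corner of the size-$(n-1)$ tableau and hence never empty,
\[
\mbP_{n-1,\a,\b}(x_{j_2-1},\dots,x_{j_r-1},\b_{n-1,1})=\mbP_{n-1,\a,\b}(x_{j_2-1},\dots,x_{j_r-1})-\mbP_{n-1,\a,\b}(x_{j_2-1},\dots,x_{j_r-1},\a_{n-1,1}),
\]
and conditioning on $\a$ in that corner is structurally clean (\cite[Lemma~7.5]{HJ} reduces it to an unconditioned tableau of size $n-2$). Both resulting terms are then plain instances of the induction hypothesis of the theorem itself, so no auxiliary lemma and no nested induction are needed; the remaining work is the algebra in the paper's equations \eqref{4b}--\eqref{5b}. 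If you want to keep your route, the cleanest way to actually prove your auxiliary identity is precisely this complementation --- but at that point the identity becomes redundant and you have reproduced the paper's argument.
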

\begin{proof}
Suppose (\ref{2_diff}) holds. We proceed by induction on $r$. As in the proof of Theorem~\ref{rSymbols}  by passing to $\hn:=n-(j_1-1)$, $\hj_i:=j_i-(j_1-1)$, and $\hbb:=b+j_1-1$ we may assume that $j_1=1$.

By the law of total probability we have
\begin{eqnarray*} 
\mathbb{P}_{n , \a, \b}(x_1, \dots , x_{ j_r}) 
&=&
\mathbb{P}_{n , \a, \b}(x_{j_2}, \dots ,x_{j_r}  \hspace{1mm} |
\hspace{1mm} x_1 = \a)
\mathbb{P}_{n , \a, \b} (x_1 = \a)\\&&\quad+
\mathbb{P}_{n , \a, \b}(x_{j_2} , \dots,x_{j_{r}} \hspace{1mm} | \hspace{1mm} x_{1} = \b) 
\mathbb{P}_{n , \a, \b} (x_1 = \b).
\end{eqnarray*}
Now consider two cases: \\
\textbf{Case 1:} $x_1 = \a$.
By Lemma \ref{SWCorner} and the induction hypothesis:
\begin{eqnarray*} &&
\mathbb{P}_{n , \a, \b}(x_{j_2} , \dots , x_{j_r}  \hspace{1mm} | \hspace{1mm} x_1 = \a) 
= 
\mathbb{P}_{n - 2, \a, \b}(x_{j_{2}-2} , \dots , x_{j_r - 2})  \\ &&\quad= \prod_{k=1}^{r-1}\frac{1}{ n -2 + a + {b} - (r-1) + k - 1}
= \prod_{k=1}^{r-1}\frac{1}{ n  + a + {b} - r + k - 2}
\end{eqnarray*}
and by (\ref{1BOXA}), 
\[
\mathbb{P}_{n , \a, \b} (x_1 = \a) = \frac{{b}}{(n + a + b-1)_2
}.
\]
Therefore,  
\begin{eqnarray*} &&
\mathbb{P}_{n , \a, \b}(\a_1, x_{j_2} , \dots,x_{j_r} ) 
=
\frac{{b}}{(n  + a + {b}-1)_2
} \cdot \prod_{k=1}^{r-1}\frac{1}{n + a + {b} - r + k - 2}.
\end{eqnarray*}

 \textbf{Case 2:} $x_1 = \b$. By Lemma~\ref{BSWCorner} 
\begin{eqnarray*} &&
 \mathbb{P}_{n , \a, \b}(x_{j_2}, \dots , x_{ j_r}  \hspace{1mm} | \hspace{1mm} x_{j_1}  = \b) 
 =
\mathbb{P}_{n-1, \a, \b}(x_{j_2-1} , \dots , x_{j_r-1}  \hspace{1mm} | \hspace{1mm} \b_{n-1,1}) \\ &&\quad= 
\frac
{\mathbb{P}_{n-1, \a, \b}(x_{j_2-1} , \dots , x_{j_{r}-1 }, \b_{n-1,1})}{\mathbb{P}_{n-1, \a,
    \b}(\b_{n-1, 1})}. 
\end{eqnarray*}
The numerator is equal to 
\begin{eqnarray}&&\mathbb{P}_{n-1, \a, \b}(x_{j_2-1} , \dots , x_{j_r-1}) \nonumber
- 
\mathbb{P}_{n-1, \a, \b}(x_{j_2-1} , \dots , x_{j_r-1}, \hspace{1mm}  \a_{n-1, 1})  \\  \label{subtract}&&\quad=
\mathbb{P}_{n-1, \a, \b}(x_{j_{2} - 1}, \dots , x_{j_{r} - 1}) \\ \nonumber&&\qquad-
\mathbb{P}_{n-1, \a, \b}(x_{j_2 -1}, \dots , x_{j_r - 1} \hspace{1mm} | \hspace{1mm} \a_{n-1, 1}) 
\mathbb{P}_{n-1, \a, \b} (\a_{n-1, 1} ).
\end{eqnarray}
By \cite[Lemma~7.5]{HJ}  and the induction hypothesis 
the conditional probability above is
\begin{equation} \label{4b}
\mathbb{P}_{n-2, \a, \b} (x_{j_2-2} , \dots , x_{j_r- 2}) = \prod_{k=1}^{r-1} \frac{1}{n  + a + {b} - r + k - 2}. 
\end{equation}
By (\ref{1BOXA}), (\ref{1BOXD}), and the induction
hypothesis, 
\begin{equation} \label{1b}
\mathbb{P}_{n , \a, \b} (x_1 = \b) = \frac{n  + a - 2}{
(n  + a + {b} - 1)_2}
\end{equation}
\begin{equation} \label{2b}
\frac{1}{\mathbb{P}_{n-1, \a, \b}(\b_{n-1, 1})}  = \frac{n  + a + {b} - 2}{n  + a - 2} 
\end{equation}
\begin{equation} \label{3b}
\mathbb{P}_{n-1, \a, \b}(x_{j_2-1}, \dots , x_{j_r -1}) = \prod_{k=1}^{r-1}
\frac{1}{n -1 + a + {b} - r + k 
} 
\end{equation}
\begin{equation}\label{5b}
\mathbb{P}_{n-1, \a, \b} (\a_{n-1, 1}) = \frac{{b}}{n  + a + {b} - 2}.
\end{equation}
Combining (\ref{4b}) - (\ref{5b}): 
\begin{eqnarray*} &&
\mathbb{P}_{n , \a, \b}(\b_1, x_{j_2},\dots , x_{j_r} ) 
=
\frac{1}{n + a + {b}-1} \cdot \Big( \prod_{k=1}^{r-1}
\frac{1}{n - 1 + a + {b} - r + k 
} \\ &&\qquad-
\frac{{b}}{n  + a + {b} - 2
} \prod_{k=1}^{r-1} \frac{1}{n  + a + {b} - r + k - 2} \Big).
\end{eqnarray*}
Adding Case 1 and Case 2: 
\begin{eqnarray*} 
\mathbb{P}_{n, \a, \b}(x_{1}, ... , x_{j_r})
 &=&
\frac{1}{n  + a + {b}-1} \cdot \prod_{k=1}^{r-1} \frac{1}{n  + a + {b} - r + k - 1
} \\ &&\quad=
\prod_{k=1}^{r} \frac{1}{n + a + b - r + k - 1}
\end{eqnarray*}
which proves our assertion  when (\ref{2_diff}) holds.

If  there exists $j_{k}$ such that $j_{k} = j_{k+1} - 1$, then
$\{x_{j_1} ,\dots  , x_{j_r}\}$ implies  that two boxes side by side on the $(n-i, i)$ diagonal are non-empty, which is impossible by the rules of a  staircase tableau. Therefore the probability is $0$.
\end{proof}

As our final result of this section, we consider the number of symbols on the second
main diagonal, which we denote by $X_{n}$. Then $X_{n} =
\sum^{n-1}_{j=1} I_{x_{j}}$ and we clearly have $X_n=A_n+B_n$. The asymptotic distribution of the number of symbols on the second main diagonal is given in the following theorem. It suggests that, even though $A_n$ and $B_n$ are not independent, they should be asymptotically independent, as the limiting law of $X_n$  is the same as the law of the sum of two iid  $\operatorname{Pois}(1/2)$ random variables.
\begin{theorem}\label{thm:symb} As $n\to\infty$, 
\begin{equation} \label{nzp2}
X_{n} \stackrel{d}{\rightarrow} Pois \left( 1 \right).
\end{equation}
\end{theorem}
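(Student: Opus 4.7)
The plan is to prove convergence to $\operatorname{Pois}(1)$ by the method of factorial moments, exactly as in the proof of Theorem~\ref{DT}. Writing $X_n=\sum_{j=1}^{n-1} I_{x_j}$ and applying Lemma~\ref{fac_mom} together with Theorem~\ref{rNZero}, I obtain
\[
\mathbb{E}(X_n)_r \;=\; r!\sum_{J_{r,n-1}}\mathbb{P}_{n,\a,\b}(x_{j_1},\dots,x_{j_r})
\;=\; r!\,|J_{r,n-1}|\prod_{k=1}^{r}\frac{1}{n+a+b-r+k-1}.
\]
The decisive simplification, compared with Theorem~\ref{DT}, is that the joint probability from Theorem~\ref{rNZero} does not depend on the specific indices $j_1,\dots,j_r$ at all (only on $r$, $n$, $a$, $b$), so the sum reduces to a counting problem.

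Next I would compute $|J_{r,n-1}|$ by the standard shift $y_k:=j_k-(k-1)$, which bijects $J_{r,n-1}$ with strictly increasing $r$-tuples in $\{1,\dots,n-r\}$, giving $|J_{r,n-1}|=\binom{n-r}{r}$. Therefore
\[
\mathbb{E}(X_n)_r \;=\; r!\binom{n-r}{r}\prod_{k=1}^{r}\frac{1}{n+a+b-r+k-1}
\;=\; \frac{(n-r)_r}{\prod_{k=1}^{r}(n+a+b-r+k-1)}.
\]
Both numerator and denominator are products of $r$ factors, each asymptotic to $n$ as $n\to\infty$, so $\mathbb{E}(X_n)_r \to 1 = 1^r$, which is the $r$th factorial moment of $\operatorname{Pois}(1)$.

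Finally I would invoke \cite[Theorem~20, Chapter~1]{B} (the same reference used in the proof of Theorem~\ref{DT}) to conclude $X_n\stackrel{d}{\to}\operatorname{Pois}(1)$. I do not expect any genuine obstacle: Theorem~\ref{rNZero} has already done all the combinatorial work, and because the joint probability factors in a way independent of the locations of the non-empty boxes, no analogue of Lemma~\ref{LA} is needed here. The only mild point worth remarking on is the one already highlighted in the theorem statement, namely that this limit matches the distribution of the sum of two independent $\operatorname{Pois}(1/2)$ variables, confirming the asymptotic independence of $A_n$ and $B_n$ hinted at by Theorem~\ref{DT} and Corollary~\ref{BC}.
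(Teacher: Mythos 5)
Your proposal is correct and follows essentially the same route as the paper: Lemma~\ref{fac_mom} plus Theorem~\ref{rNZero}, counting $|J_{r,n-1}|$, and concluding via \cite[Theorem~20]{B}. The only cosmetic difference is that you compute $|J_{r,n-1}|=\binom{n-r}{r}$ exactly, while the paper settles for $\binom{n-1}{r}+O(n^{r-1})$; both give $\mathbb{E}(X_n)_r\to 1$.
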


\begin{proof}
By Theorem~\ref{rNZero}  and Lemma~\ref{fac_mom} 
\begin{eqnarray*}  
\mathbb{E}(X_{n})_{r} 
&=& r!|J_{r,n-1}|\prod_{k=1}^{r}\frac{1}{n + a + b - r + k -1}
\\ &=&r!\left( {n-1\choose r}+O(n^{r-1})\right) \prod_{k=1}^{r}\frac{1}{n + a + b - r + k -1}\\&
\approx& 
r! \cdot \frac{(n-1)^{r}}{r!n^{r}} 
\rightarrow 1 \quad \mbox{as\ } n \rightarrow \infty.
\end{eqnarray*}
The result   follows by \cite[Theorem~20]{B}, as discussed in the proof of Theorem~\ref{DT}.
\end{proof}
\section{The distribution of parameters on the third main diagonal}

For the considerations of the third main diagonal, note that if the box $(n-k-1,k)$ is non-empty, then by the rules of staircase tableaux, the corresponding diagonal boxes $(n-k+1,k)$ and $(n-k-1,k+2)$ are non-empty (in fact, they contain  $\beta$ and $\alpha$, respectively). The next lemma shows that the remaining two boxes $(n-k-1,k+1)$ and $(n-k,k)$ are likely to be empty. We write $x_{i,j}$ to indicate that the $(i,j)$ box of a tableau is non--empty.

\begin{lemma}\label{ndsx}
Consider arbitrary $S\in\overline\mcS_{n}$
 such that  $x_{n-k-1,k}$.  If $x_{n-k-1,k+1}$ or $x_{n-k,k}$, then $\mbP_{n,\a,\b}(S)=O\left(\frac{1}{(n+a+b)^2}\right)$.
\end{lemma}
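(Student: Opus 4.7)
The strategy is to use the subtableau identity~\eqref{SUBT} to reduce the claim to a computation inside a size-$3$ random staircase tableau that captures the L-shape around $(n-k-1,k)$, and then to carry out the bound by a direct case analysis.  Taking $i=n-k-1$ and $j=k$ in~\eqref{SUBT} shows that $S[n-k-1,k]\in\overline\mcS_{3,\widehat\a,\widehat\b}$ with $\widehat a=a+n-k-2$ and $\widehat b=b+k-1$, so $\widehat a+\widehat b=n+a+b-3$.  Under this reduction the six boxes of the L-shape correspond to the six boxes of the size-$3$ tableau via $(n-k-1,k)\leftrightarrow(1,1)$, $(n-k-1,k+1)\leftrightarrow(1,2)$, $(n-k-1,k+2)\leftrightarrow(1,3)$, $(n-k,k)\leftrightarrow(2,1)$, $(n-k,k+1)\leftrightarrow(2,2)$, and $(n-k+1,k)\leftrightarrow(3,1)$.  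Since $\{S\}\subseteq\{S':S'[n-k-1,k]=S[n-k-1,k]\}$, we obtain
\[
\mbP_{n,\a,\b}(S)\ \le\ \mbP_{3,\widehat\a,\widehat\b}\bigl(S[n-k-1,k]\bigr),
\]
so it suffices to bound the right-hand side.

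Next I would enumerate the size-$3$ configurations allowed by the hypotheses using Definition~\ref{DEFST}.  Suppose first that $x_{1,1}\wedge x_{1,2}$ holds in the size-$3$ tableau.  A $\b$ at $(1,2)$ would empty $(1,1)$ by rule~(3), so $(1,2)=\a$; then $(1,3)$ on the main diagonal cannot be $\b$ for the same reason, so $(1,3)=\a$; $(2,2)$ on the main diagonal cannot be $\a$, else rule~(2) empties $(1,2)$, so $(2,2)=\b$, which by rule~(3) forces $(2,1)=0$; finally $(3,1)$ on the main diagonal cannot be $\a$ (rule~(2) would empty $(1,1)$), so $(3,1)=\b$, leaving only $(1,1)\in\{\a,\b\}$ free.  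The case $x_{1,1}\wedge x_{2,1}$ is symmetric, forcing $(2,1)=\b$, $(2,2)=\a$, $(1,2)=0$, $(1,3)=\a$, and $(3,1)=\b$ with $(1,1)\in\{\a,\b\}$ free.  Thus in either case only two size-$3$ tableaux are possible, with weights $\widehat\a^{3}\widehat\b^{2}$ and $\widehat\a^{2}\widehat\b^{3}$.

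Finally, from~\eqref{Z4} one has $Z_3(\widehat\a,\widehat\b)=\widehat\a^{3}\widehat\b^{3}(\widehat a+\widehat b)(\widehat a+\widehat b+1)(\widehat a+\widehat b+2)$; summing the two admissible weights in each case and dividing by $Z_3$ yields
\[
\mbP_{3,\widehat\a,\widehat\b}\bigl(S[n-k-1,k]\bigr)\ \le\ \frac{\widehat\a^{2}\widehat\b^{2}(\widehat\a+\widehat\b)}{Z_3(\widehat\a,\widehat\b)}\ =\ \frac{1}{(\widehat a+\widehat b+1)(\widehat a+\widehat b+2)}\ =\ O\!\left(\frac{1}{(n+a+b)^{2}}\right),
\]
which gives the lemma.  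The main (and really only) obstacle is the case analysis in the previous paragraph, where rules~(2) and~(3) must be propagated simultaneously on a row and a column to pin down all six boxes of the L-shape; once this is done, the probability estimate reduces to a short direct computation using~\eqref{Z4}.
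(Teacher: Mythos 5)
Your proposal is correct and follows essentially the same route as the paper: reduce to the size-$3$ subtableau $S[n-k-1,k]$ via \eqref{SUBT}, enumerate the admissible configurations (two $\a$'s, two $\b$'s, one forced empty box, and a free symbol at the corner), and compute the probability directly from $Z_3(\ha,\hb)$. Your version is slightly more careful in two respects — you spell out the rule-(2)/(3) propagation that pins down the six boxes, and you correctly use an inequality $\mbP_{n,\a,\b}(S)\le\mbP_{3,\ha,\hb}(S[n-k-1,k])$ where the paper writes an equality — but the substance is identical.
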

\begin{proof}  Note that only one of the boxes $(n-k-1,k+1)$, $(n-k,k)$ of $S$ may contain a non--zero symbol, and   in either case, under the assumptions of the lemma,  the subtableau  $S[n-k-1,k]$ has two $\alpha$'s, two $\beta$'s, one empty box, and a symbol corresponding to position $(n-k-1,k)$ in $S$.  By \eqref{SUBT}, \[\mbP_{n,\a,\b}(S)=\mbP_{3,\ha,\hb}(S[n-k-1,k]),\]
 with $\haa=a+n-k-2$ and $\hbb=b+k-1$, and by the above observation
\[ \mbP_{3,\ha,\hb}(S[n-k-1,k])=2\frac{(\ha+\hb)(\ha\hb)^2}{Z_3(\ha,\hb)}=\frac2{(\haa+\hbb+1)^{\bar{2}}}=O\left(\frac{1}{(n+a+b)^2}\right).\]
\end{proof}

\subsection{The asymptotic distribution of $\a$'s on the third main diagonal}
We begin by deriving the asymptotic probability for the joint distribution of $\alpha$'s on the third main diagonal. We let 
$\a_j^{(3)}$ be the event  that there is an $\alpha$ in the $j$th column on the third main diagonal (i.e in the box $(n-j-1,j)$, $1\le j\le n-2$) but since we will be dealing exclusively with the third main diagonal through the remainder of this section we will drop the superscript. 

We can now give the asymptotic joint distribution of the $\alpha$'s on the third main diagonal. 
\begin{lemma}
\label{rsymbols}
Let $ 1 \leq j_{1} < ... < j_{r} \leq n - 2$. If 
\begin{equation}\label{3_diff}j_{l} \leq j_{l + 1} - 3, \hspace{2mm} \forall l = 1, 2, ..., r -
1\end{equation} 
then 
\[\mbP_{n,\a,\b}(\a_{j_1},...,\a_{j_r}) = \prod_{l=1}^{r}\frac{b + j_{r - l + 1} - 2r + 2l - 1}{(n+a+b-2r+2l-1)_2} + O\left(\frac{1}{(n+a+b)^{r+1}}\right). \]
Otherwise,
\[\mbP_{n,\a,\b}(\a_{j_1},...,\a_{j_r}) = O\left(\frac{1}{(n+a+b)^r}\right).
\]
\end{lemma}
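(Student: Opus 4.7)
The plan is to follow the template of Theorem~\ref{rSymbols}, with the crucial difference that conditioning on an $\alpha$ on the third main diagonal does not yield an exact sub-tableau identity like Lemma~\ref{SWCorner}. The recursive identities will consequently be only approximate, producing the error terms in the statement.

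For the ``otherwise'' case, suppose $j_{l+1} - j_l \in \{1, 2\}$ for some $l$. The pair $\alpha_{j_l}, \alpha_{j_{l+1}}$ forces additional symbols locally: for gap one, two non-empty boxes lie on adjacent third-diagonal positions, while for gap two the main diagonal box at column $j_l + 2$ is forced to be $\beta$ (otherwise an $\alpha$ there would require the $\alpha_{j_{l+1}}$ box to be empty). In either subcase a direct computation of a size-$3$ or size-$4$ sub-tableau probability via (\ref{SUBT}), in the spirit of Lemma~\ref{ndsx}, gives an $O(1/(n+a+b)^2)$ bound for the local configuration. Combined with the $O(1/(n+a+b))$ factors from the remaining $r - 2$ third-diagonal $\alpha$'s via (\ref{1BOXA}), this yields the asserted $O(1/(n+a+b)^r)$ bound.

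For the main case I induct on $r$. The base $r = 1$ follows from (\ref{1BOXA}), since a direct expansion of the difference shows $(b + j_1 - 1)/(n + a + b - 2)_2 = (b + j_1 - 1)/(n + a + b - 1)_2 + O(1/(n+a+b)^2)$. For the inductive step, reduce to $j_1 = 1$ by applying (\ref{SUBT}) to $S[1, j_1]$, setting $\widehat n = n - j_1 + 1$, $\widehat b = b + j_1 - 1$, $\widehat j_l = j_l - j_1 + 1$, exactly as in Theorem~\ref{rSymbols}. With $j_1 = 1$, analyze $\mbP_{n, \a, \b}(\alpha_{n-2, 1}, \alpha_{j_2}, \ldots, \alpha_{j_r})$ by decomposing over the possible contents of columns $1, 2, 3$ compatible with $\alpha_{n-2, 1}$: the box $(n-2, 3)$ must contain an $\alpha$ (otherwise a $\beta$ there would force $(n-2, 1)$ empty), and columns $1$ and $3$ above row $n - 2$ are forced empty, leaving a finite number of configurations of the bottom corner $(n-1, 1), (n, 1), (n-2, 2), (n-1, 2)$ together with a possible tail of symbols in column $2$ above row $n - 2$. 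For each such configuration, the remainder is the sub-tableau $S[1, 4]$, coupled to columns $1, 2, 3$ only through the row rule for $\beta$'s; apply (\ref{SUBT}) together with the induction hypothesis at level $r - 1$ in $\overline{\mcS}_{n-3, \a, b+3}$ (in which the events correspond to $\alpha$'s on the third main diagonal at columns $j_2 - 3, \ldots, j_r - 3$, with gaps still $\geq 3$) to obtain an approximation of the conditional probability. Summing the resulting expressions over the finitely many configurations and multiplying by $\mbP(\alpha_{n-2, 1}) = b/(n+a+b-2)_2$, a (somewhat intricate) algebraic manipulation shows that the leading-order terms combine into the claimed product, while all lower-order terms are absorbed into $O(1/(n+a+b)^{r+1})$.

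The principal obstacle is the combinatorial bookkeeping in the inductive step: the conditioning on $\alpha_{n-2, 1}$ does not give a clean sub-tableau reduction, so one must carefully enumerate the allowed configurations of columns $1, 2, 3$, correctly track the coupling with $S[1, 4]$ induced by the row rule for $\beta$'s (in particular configurations with extra symbols in column $2$ above row $n - 2$), and then verify that the various leading-order contributions combine into the target formula (which, strikingly, has exactly the same form as the exact formula for the second main diagonal in Theorem~\ref{rSymbols}). Error propagation through the induction is also delicate, but since each step contributes errors one order smaller than the main term, they accumulate in a controlled way to give the asserted $O(1/(n+a+b)^{r+1})$ bound.
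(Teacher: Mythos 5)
Your overall architecture (induction on $r$, reduction to $j_1=1$ via \eqref{SUBT}, a local analysis of the bottom-left corner, with the base case read off from \eqref{1BOXA}) matches the paper's, and your base case is fine. But the heart of the argument --- the inductive step --- is missing the one structural idea that makes it work, and is instead deferred to ``a (somewhat intricate) algebraic manipulation.'' The paper does \emph{not} enumerate all configurations of columns $1$--$3$ (which, as you yourself note, include an unbounded ``tail'' in column $2$ coupled to the rest of the tableau through the row rule for $\b$'s, so the conditional law of $S[1,4]$ is not a clean $\mbP_{n-3,\a,\widetilde\b}$ for most configurations). Instead it splits according to whether the two \emph{second}-main-diagonal neighbours $(n-1,1)$ and $(n-2,2)$ of the box $(n-2,1)$ are empty or not. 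If either is occupied, Lemma~\ref{ndsx} bounds the corner configuration by $O(1/(n+a+b)^2)$, and combined with the induction hypothesis for the remaining columns that whole contribution is $O(1/(n+a+b)^{r+1})$. If both are empty, the conditioning $\{\a_{1},0_{n-1,1},0_{n-2,2}\}$ forces columns $1$ and $3$ to be empty above row $n-2$ while leaving the other $n-2$ columns (including column $2$) unrestricted, so the conditional probability is \emph{exactly} $\mbP_{n-2,\a,\b}(\a_{j_2-2},\dots,\a_{j_r-2})$ and $\mbP_{n,\a,\b}(\a_1,0_{n-1,1},0_{n-2,2})=\frac{b}{(n+a+b-1)_2}$, which reproduces the product formula by induction. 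Without isolating this dominant event your enumeration has no reason to collapse to the stated product, and I do not see how to close it as written.

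Second, your treatment of the ``otherwise'' case is not valid as stated: after bounding the local two-column configuration by $O(1/(n+a+b)^2)$ you multiply by ``the $O(1/(n+a+b))$ factors from the remaining $r-2$ third-diagonal $\a$'s via \eqref{1BOXA}.'' The events are not independent, so the joint probability is not bounded by the product of the marginals, and \eqref{1BOXA} only controls single boxes. The paper avoids this by proving the $O(1/(n+a+b)^r)$ bound \emph{inside the same induction}: a small gap occurring after the first position is absorbed by the induction hypothesis applied to the subtableau, while a small first gap is handled by an exact conditional reduction (a formula of the type $\mbP_{n,\a,\b}(\,\cdot\mid\a_{1},\a_{2})=\mbP_{n-4,\a,\b}(\,\cdot\,)$ times a directly computed $O(1/(n+a+b)^2)$ corner probability). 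You should fold your ``otherwise'' bound into the induction in the same way. A small further remark: in your gap-two analysis, the main-diagonal box at column $j_l+2$ cannot be $\b$ either, since that would empty the row to its left including the box carrying $\a_{j_l}$; so that configuration is actually impossible rather than merely rare.
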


\begin{proof} 
The proof is by induction on $r$. 
When $r=1$, 
\beq*
\mbP_{n,\a,\b}(\a_{j_1})&=&
\mbP_{n,\a,\b}(\a_{j_1},\a_{n-j_1-1,j_1+1})
+\mbP_{n,\a,\b}(\a_{j_1},\b_{n-j_1,j_1})
\\
&&\quad+\mbP_{n,\a,\b}(\a_{n-j_1-1,j_1},0_{n-j_1-1,j_1+1},0_{n-j_1,j_1}).
\eeq*
By Lemma \ref{ndsx}, each of the first two probabilities on the right--hand side is $O(1/n^2)$. By \eqref{SUBT} (applied with $i=n-j_1-1$ and $j=j_1$ so that $\haa=a+n-j_1-2$ and $\hbb=b+j_1-1$) and a direct computation for the resulting tableaux of size three, the last probability is
\[\frac{\ha^3\hb}{Z_3(\ha,\hb)}+\frac{\ha^2\hb^2}{Z_3(\ha,\hb)}=\frac{\ha^2\hb}{(\ha\hb)^2(\haa+\hbb+1)^{\overline2}}=
\frac{b+j_1-1}{
(n+a+b-1)_2},
\]
as required.

Assume the statement holds for integers up to $r-1$. As in the earlier proofs assume without loss that $j_1=1$ and consider 
the following  two cases.
\begin{case}\label{first far} $j_2 \geq j_1+3=4$.
\end{case}
\begin{eqnarray}
&&\mbP_{n,\a,\b}(\a_{1},...,\a_{j_r})=\mbP_{n,\a,\b}(\a_{n-2,2},\a_{1},...,\a_{j_r})\nonumber
+
\mbP_{n,\a,\b}(\b_{n-1,1},\a_{1},\dots,\a_{j_r})
\label{3cases}\\&&\quad
+\mbP_{n,\a,\b}(0_{n-1,1},0_{n-2,2}, \a_{1},...,\a_{j_r}).
\end{eqnarray}
The main contribution is the last probability which can be written as follows,
\beq*
\mbP_{n,\a,\b}(\a_{j_2},...,\a_{j_r}|\a_{1},0_{n-1,1},0_{n-2,2})
\mbP_{n,\a,\b}(\a_{1},0_{n-1,1},0_{n-2,2}).
\eeq*
The conditional probability is equal to $\mbP_{n-2,\a,\b}(\a_{j_2-2},...,\a_{j_r-2})$ (this is because the condition forces zeroes in the first and the third column of $S[j_1-1,1]$ above its $(n-2)$nd row and the remaining $n-2$ columns are unrestricted and thus form a general tableau of size $n-2$).    
To compute $\mbP_{n,\a,\b}(\a_{1},0_{n-1,1},0_{n-2,2})$,
remove the top $n-3$ rows setting $\haa:=a+n-3$ and directly calculate that 
this probability is 
\[
(\ha+\b)\frac{\ha^2\b}{Z_3(\ha,\b)}=\frac{b}{(\haa+b+1)^{\overline{2}}}=\frac{b}{(n+a+b-1)_2}.
\]
So then by the induction hypothesis, if $j_l\leq j_{l+1}-3$  for all $l=2,\dots,r-1$ then the last probability in \eqref{3cases} is 
\beq*
&&\left(\prod_{k=1}^{r-1}\frac{b + j_{r - k + 1}  - 2r + 2k-1 }{(n+a+b-2r+2k-1)_2}+O\left(\frac{1}{(n+a+b)^{r}}\right)\right)\frac{b}{(n+a+b-1)_{2}}\\
&&\qquad=\prod_{k=1}^{r}\frac{b + j_{r - k + 1} - 2r + 2k-1}{(n+a+b-2r+2k-1)_2}+O\left(\frac{1}{(n+a+b)^{r+1}}\right).
\eeq*
(Note that in view of \eqref{hats} the last  equality holds regardless of whether $j_1=1$ or $j_1>1$.)
On the other hand, if for some $2\le l\le r-1$, $j_l>j_{l+1}-3$ then by the induction hypothesis again and  \eqref{hats} (which implies that $b=O(n)$) the same expression is 
\beq*
&&O\left(\frac{1}{(n+a+b)^{r-1}}\right)\frac{b}{(n+a+b-1)_2}
=O\left(\frac{1}{(n+a+b)^r}\right).
\eeq*
Now, returning to Equation (\ref{3cases}), the first two probabilities can be calculated as follows,
\beq*
&=&\mbP_{n,\a,\b}(\a_{j_2},...,\a_{j_r}|\a_{n-2,2},\a_{1})\mbP_{n,\a,\b}(\a_{n-2,2},\a_{1})\\
&&+\mbP_{n,\a,\b}(\a_{j_2},...,\a_{j_r}|\b_{n-1,1},0_{n-2,2},\a_{1})\mbP_{n,\a,\b}(\b_{n-1,1},0_{n-2,2},\a_{1})\\
&=&\mbP_{n-3,\a,\b}(\a_{j_2-3},...,\a_{j_r-3})\mbP_{n,\a,\b}(\a_{n-2,2},\a_{1})\\
&&+\mbP_{n-3,\a,\b}(\a_{j_2-3},...,\a_{j_r-3})\mbP_{n,\a,\b}(\b_{n-1,1},0_{n-2,2},\a_{1}).
\eeq*
By the inductive hypothesis
\[\mbP_{n-3,\a,\b}(\a_{j_2-3},...,\a_{j_r-3})=O\left(\frac1{(n+a+b)^{r-1}}\right)=O\left(\frac1{(n+a+b)^{r-1}}\right)
\]
and by
Lemma \ref{ndsx}, each of the other two probabilities above is
\[O\left(\frac{1}{(n+a+b)^2}\right)
.\]
Hence, this probability is 
\[O\left(\frac{1}{(n+a+b)^{r-1}}\right)O\left(\frac{1}{(n+a+b)^2}\right)=O\left(\frac{1}{(n+a+b)^{r+1}}\right).
\]
\begin{case}\label{first close}  $j_2<4$. 
\end{case}
The case where $j_2=2$ is impossible by the rules of staircase tableaux and hence has probability zero. 
 For the case where $j_2=3$
\beq*
\mbP_{n,\a,\b}(\a_{j_1},...,\a_{j_r})
&=&\mbP_{n,\a,\b}(\a_{j_3},...,\a_{j_r})|\a_{1},\a_{2})\mbP_{n,\a,\b}(\a_{1},\a_{2})\\
&=&\mbP_{n-4,\a,\b}(\a_{j_3-4},...,\a_{j_r-4})\mbP_{n,\a,\b}(\a_{1},\a_{2}).
\eeq*
To compute $\mbP_{n,\a,\b}(\a_{1},\a_{2})$, remove the top $n-4$ rows
setting $\haa:=a+n-4$ and directly calculate that 
\begin{equation}\label{2zeros}
\mbP_{n,\a,\b}(\a_{1},\a_{2})=\frac{b^2}{(\haa+b)^{\overline{4}}}=O\left(\frac{b^2}{(n+a+b)^4}\right)=O\left(\frac1{(n+a+b)^2}\right).
\end{equation}
By the induction hypothesis, 
\[\mbP_{n-4,\a,\b}(\a_{j_3-4},...,\a_{j_r-4})=O\left(\frac1{(n+a+b)^{r-2}}\right)=O\left(\frac1{(n+a+b)^{r-2}}\right).\]
Therefore,
\[
\mbP_{n,\a,\b}(\a_{1},...,\a_{j_r})=
O\left(\frac{1}{(n+a+b)^{r-2}}\right)O\left(\frac{1}{(n+a+b)^2}\right)=O\left(\frac{1}{(n+a+b)^r}\right).
\]
Combining Cases \ref{first far} and \ref{first close}, if $j_{l} \leq j_{l + 1} - 3, \hspace{2mm} \forall\  l = 1, 2, \dots, r -1$, then 
\beq*
\mbP_{n,\a,\b}(\a_{j_1},...,\a_{j_r}) 
&=&\prod_{l=1}^{r}\frac{b + j_{r - l + 1} - 2r + 2l - 1}{(n+a+b-2r+2l-1)_2} + O\left(\frac{1}{(n+a+b)^{r+1}}\right).
\eeq*
Otherwise, 
\[
\mbP_{n,\a,\b}(\a_{j_1},...,\a_{j_r}) = 
O\left(\frac{1}{(n+a+b)^r}\right).
\]
\end{proof}

\n The above lemma gives the following result:
\begin{theorem}\label{alpha_on_3rd}
Let $A_{n}^{(3)}$ to be the number of $\alpha$'s on the third main diagonal of a random weighted staircase tableau. Then,
as $n\to\infty$,
\[
A_{n}^{(3)} \stackrel{d}{\rightarrow} Pois \left( \frac{1}{2} \right).
\]
\end{theorem}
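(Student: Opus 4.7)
The plan is to follow the method of factorial moments exactly as in the proof of Theorem~\ref{DT}, but with the additional bookkeeping forced by the fact that Lemma~\ref{rsymbols} provides only asymptotic joint probabilities (with $O$-error terms) rather than exact ones. Writing $A_n^{(3)}=\sum_{j=1}^{n-2}I_{\alpha_j}$ as a sum of indicators, Lemma~\ref{fac_mom} expresses $\mathbb{E}(A_n^{(3)})_r$ as $r!$ times the sum of $\mathbb{P}_{n,\a,\b}(\alpha_{j_1},\ldots,\alpha_{j_r})$ over all increasing $r$-tuples $1\le j_1<\cdots<j_r\le n-2$, and by \cite[Theorem~20, Chapter~1]{B} it suffices to verify that $\mathbb{E}(A_n^{(3)})_r\to (1/2)^r$ for every $r\ge1$.

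The sum splits into a ``well-separated'' part where (\ref{3_diff}) holds and a ``close'' part where some $j_{l+1}-j_l\le 2$. For the close part, fixing any such consecutive pair consumes one degree of freedom, so the number of close tuples is $O(n^{r-1})$; combined with the bound $O(1/(n+a+b)^r)$ supplied by the second clause of Lemma~\ref{rsymbols}, these tuples contribute $O(1/n)$ in total. In the well-separated part each joint probability equals the product $\prod_{l=1}^r\frac{b+j_{r-l+1}-2r+2l-1}{(n+a+b-2r+2l-1)_2}$ plus an error of order $1/(n+a+b)^{r+1}$, and since there are $O(n^r)$ such tuples the cumulative error is again $O(1/n)$. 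The principal term is therefore
\[
r!\sum_{J_{r,n-2}^{(3)}}\prod_{l=1}^{r}\frac{b+j_{r-l+1}-2r+2l-1}{(n+a+b-2r+2l-1)_2},
\]
which, after replacing each factor by its leading order $j_{r-l+1}/n^2$, reduces to computing $\sum_{J_{r,n-2}^{(3)}}\prod_l j_{r-l+1}$.

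At this point I would invoke the analog of Lemma~\ref{LA} with minimum spacing $3$ in place of $2$: its leading asymptotic is still $n^{2r}/(2^r r!)$, because tightening the spacing constraint by a constant affects only lower-order terms. One can either mimic the inductive computation of Lemma~\ref{LA} verbatim, or sandwich the desired sum between the unrestricted sum (which is $\sim n^{2r}/(2^r r!)$) and a sum over a slightly smaller index range, both of which agree to leading order. Combining everything gives $\mathbb{E}(A_n^{(3)})_r\to r!\cdot\tfrac{1}{2^r r!}=(1/2)^r$. The main obstacle is simply the careful error accounting: one must confirm that both sources of error---the ``close'' tuples handled by the $O$-bound in Lemma~\ref{rsymbols}, and the $O$-tail of the asymptotic formula in the well-separated case---decay to zero after summation, and that the replacement of numerators by their leading order introduces no cross-term of unit order. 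Once these estimates are in place, the convergence of factorial moments follows and the Poisson limit is immediate.
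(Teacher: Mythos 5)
Your proposal is correct and follows essentially the same route as the paper: factorial moments via Lemma~\ref{fac_mom}, splitting the $r$-tuples into well-separated and close families, showing the close family and the $O\left(1/(n+a+b)^{r+1}\right)$ error terms each contribute $O(1/n)$ after summation, and evaluating the leading sum via the asymptotics behind Lemma~\ref{LA}. The only cosmetic difference is that the paper handles the spacing-$\ge 3$ sum by comparing it with the spacing-$\ge 2$ index set $J_{r,n-2}$ of Lemma~\ref{LA} and discarding the $O(n^{r-1})$ tuples having a gap equal to $2$, which is one concrete implementation of the sandwich you describe.
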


\begin{proof}
Write
\[
\sum_{1 \leq j_{1} < ... < j_{r} \leq n - 2} \mathbb{P}(\a_{j_{1}},\ldots, \a_{j_{r}}) 
=\sum_{J} \mathbb{P}(\a_{j_{1}},\ldots, \a_{j_{r}}) +
\sum_{J^c}\mathbb{P}(\a_{j_{1}},\ldots, \a_{j_{r}}), 
\]
where
\[J=\{1\le j_1<\dots<j_r\le n-2:\  \forall\ l=1,\dots,r-1; j_{l+1}-j_l\geq 3\}
\]
and
\[J^c=\{1\le j_1<\dots<j_r\le n-2:\  \exists\ l=1,\dots,r-1; j_{l+1}-j_l<3\}.
\] 
By Lemma~\ref{rsymbols}, 
\beq* 
&&\sum_{J} \mathbb{P}(\a_{j_{1}},\ldots, \a_{j_{r}}) =
\sum_{J}\left(\prod_{l=1}^{r} \frac{b+j_{r-l+1}-2(r-l)-1}{(n+a+b-2(r-l)-1)_2}+O\left(\frac{1}{(n+a+b)^{r+1}}\right)\right)\\
&&\quad\approx \sum_{J}\prod_{l=1}^{r} \frac{j_{l}}{n^2}+{n-2\choose r}\cdot O\left(\frac{1}{n^{r+1}}\right)
=\sum_{J_{r,n-2}}\prod_{l=1}^{r} \frac{j_{l}}{n^2}-\sum_{J_{r,n-2}\setminus J}\prod_{l=1}^{r} \frac{j_{l}}{n^2}+
O\left(\frac{1}{n}\right).
\eeq*
If $(j_1,\dots,j_r)\in J_{r,n-2}\setminus J$ then there exists an $l$ such that $j_{l+1}-j_l=2$ and thus this set has $O\left({n-2\choose r-1}\right)$ elements. Therefore, by Lemma~\ref{LA} the expression above is asymptotic to
\[\sum_{J_{r,n-2}}\prod_{l=1}^{r} \frac{j_{l}}{n^2}+O\left({n-2\choose r-1}\cdot \frac{n^r}{n^{2r}}\right)+O\left(\frac{1}{n}
\right)=
\frac{1}{2^{r}r!}+O\left(\frac{1}{n}\right),\quad\mbox{as\ }n \rightarrow \infty. 
\]
Finally, by Lemma~\ref{rsymbols},
\[\sum_{J^c}\mathbb{P}(\a_{j_{1}},\ldots, \a_{j_{r}})=O\left({n-2\choose r-1}\frac1{n^r}\right)=O\left(\frac1n\right).
\]
Combining these expressions  with Lemma~\ref{fac_mom} completes the proof.
\end{proof}

\subsection{The asymptotic distribution of symbols on the third main diagonal}
In this section we prove that the total number of  symbols on the third main diagonal is asymptotically Poisson with parameter 1. To this end we will prove an analog of Theorem~\ref{rNZero} and Lemma~\ref{rsymbols}. Throughout this section $x_j=x_j^{(3)}$ indicates the event that the box $(n-j-1,j)$ on the third main diagonal and in the $j$th column is non--empty.   We need a preparatory observation. 
\begin{lemma}\label{switchbeta}
If $j_{1}\geq 3$, then
\[
\mbP_{n,\a,\b}(x_{j_1},...,x_{j_r}, 0_{n-1,1},\b_{n,1})=\mbP_{n,\a,\b}(x_{j_1},...,x_{j_r}, \b_{n-1,2}).
\]
\end{lemma}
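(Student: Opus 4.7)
The plan is to split both sides according to the symbol occupying a specific main-diagonal box adjacent to the prescribed $\beta$, and then to match the resulting subcases---one trivially, and one via a weight-preserving bijection involving only the first two columns.

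First, $(n-1,2)$ lies on the main diagonal of $S$ and therefore must carry either $\alpha$ or $\beta$. Splitting the left-hand side on this symbol, I would write
\[\mbox{LHS}=\mbP_{n,\a,\b}(x_{j_1},\dots,x_{j_r},0_{n-1,1},\beta_{n,1},\alpha_{n-1,2})+\mbP_{n,\a,\b}(x_{j_1},\dots,x_{j_r},0_{n-1,1},\beta_{n,1},\beta_{n-1,2}).\]
The second summand simplifies: $\beta_{n-1,2}$ automatically forces $(n-1,1)=0$ by the row rule, making $0_{n-1,1}$ redundant, so it equals $\mbP_{n,\a,\b}(x_{j_1},\dots,x_{j_r},\beta_{n,1},\beta_{n-1,2})$. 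Performing the analogous split on the right-hand side according to the symbol at the main-diagonal box $(n,1)$ gives
\[\mbox{RHS}=\mbP_{n,\a,\b}(x_{j_1},\dots,x_{j_r},\beta_{n-1,2},\alpha_{n,1})+\mbP_{n,\a,\b}(x_{j_1},\dots,x_{j_r},\beta_{n-1,2},\beta_{n,1}).\]
The $\beta$-subcases of the two sides are identical, so the lemma reduces to proving
\[\mbP_{n,\a,\b}(x_{j_1},\dots,x_{j_r},0_{n-1,1},\beta_{n,1},\alpha_{n-1,2})=\mbP_{n,\a,\b}(x_{j_1},\dots,x_{j_r},\beta_{n-1,2},\alpha_{n,1}).\]

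To prove this remaining identity, I would exhibit a weight-preserving bijection between the tableaux realizing each event. In the LHS event, $\alpha_{n-1,2}$ forces the entire column 2 above $(n-1,2)$ to be empty, while column 1 in rows $1,\dots,n-2$ is arbitrary subject only to the standard rules and to row-rule constraints imposed by $\beta$'s appearing in columns $\ge 3$. In the RHS event, $\alpha_{n,1}$ dually forces all of column 1 above $(n,1)$ to be empty, while column 2 in rows $1,\dots,n-2$ is arbitrary under analogous constraints. The bijection leaves columns $3,\dots,n$ untouched, swaps $\alpha\leftrightarrow\beta$ at the two fixed positions $(n,1)$ and $(n-1,2)$, and transplants whatever appears in rows $1,\dots,n-2$ of column 1 into the corresponding rows of column 2 (emptying column 1 in those rows). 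Both sides contribute a factor $\a\b$ from the two fixed symbols, and the transplanted configuration contributes the same monomial in either column, so the map is manifestly weight-preserving.

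The main obstacle is verifying that this bijection sends valid tableaux to valid tableaux. What needs to be shown is that the admissibility constraints on the ``free'' column are identical in both columns 1 and 2: the column rule ``everything above an $\alpha$ is empty'' reads identically for the two columns; any $\beta$ appearing in columns $\ge 3$ of row $i$ (with $1\le i\le n-2$) forces both $(i,1)$ and $(i,2)$ to be empty, so a configuration is compatible with the rest of the tableau precisely when it satisfies the same row-by-row emptiness pattern in either column; and boxes $(i,1)$ and $(i,2)$ never constrain each other (a $\beta$ at $(i,1)$ has no box to its left, and an $\alpha$ at $(i,1)$ restricts only column 1). The hypothesis $j_1\ge 3$ is invoked precisely here, guaranteeing that none of the conditioning events $x_{j_l}$ involves columns $1$ or $2$, so that the swap is compatible with the $x_{j_l}$'s. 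With the bijection validated, summing weights and dividing by $Z_n(\a,\b)$ yields the equality of the $\alpha$-subcases and completes the proof.
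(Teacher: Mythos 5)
Your proposal is correct and follows essentially the same route as the paper: both split on the symbol in the main-diagonal box $(n-1,2)$, observe that the $\beta_{n-1,2}$ subcase makes $0_{n-1,1}$ redundant, and handle the $\alpha_{n-1,2}$ subcase by the weight-preserving swap of the two fixed symbols together with transplanting the top $n-2$ boxes of column 1 into column 2. Your write-up is somewhat more explicit about verifying that the bijection preserves validity and about where $j_1\ge 3$ is used, but the underlying argument is the same.
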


\begin{proof}
Consider
\beq*
\mbP_{n,\a,\b}(x_{j_1},...,x_{j_r}, 0_{n-1,1},\b_{n,1})&=&\mbP_{n,\a,\b}(x_{j_1},...,x_{j_r}, 0_{n-1,1},\b_{n,1},\a_{n-1,2})\\
&&+\mbP_{n,\a,\b}(x_{j_1},...,x_{j_r}, 0_{n-1,1},\b_{n,1},\b_{n-1,2}).
\eeq*
Notice that 
\[\mbP_{n,\a,\b}(x_{j_1},...,x_{j_r}, 0_{n-1,1},\b_{n,1},\a_{n-1,2})=\mbP_{n,\a,\b}(x_{j_1},...,x_{j_r}, \a_{n,1},\b_{n-1,2})
\]
since the second column is empty above $\a$ and thus does not restrict the first column, except for box $(n-1,2)$ which is empty. Therefore if $\a_{n-1,2}$ and $\b_{n,1}$ are switched, and the first $n-2$ boxes in column one are switched with the first $n-2$ boxes in column two, the weight does not change. Therefore,
\beq*
&&\mbP_{n,\a,\b}(x_{j_1},...,x_{j_r}, 0_{n-1,1},\b_{n,1})\\&&\qquad
=\mbP_{n,\a,\b}(x_{j_1},...,x_{j_r}, \a_{n,1},\b_{n-1,2})+\mbP_{n,\a,\b}(x_{j_1},...,x_{j_r}, \b_{n,1},\b_{n-1,2})\\
&&\qquad=\mbP_{n,\a,\b}(x_{j_1},...,x_{j_r}, \b_{n-1,2}).
\eeq*
\end{proof}

%
%
The following gives the asymptotic joint distribution of non--zero symbols on the third main diagonal.
\begin{lemma}
\label{rsymbolsx}
Let $ 1 \leq j_{1} < ... < j_{r} \leq n - 2$. If 
\begin{equation}\label{4_diff}j_{l} \leq j_{l + 1} - 3, \hspace{2mm} \forall l = 1, 2, ..., r -
1\end{equation} 
then 
\[\mbP_{n,\a,\b}(x_{j_1},...,x_{j_r}) = \prod_{l=1}^{r}\frac{1}{n+a+b-r+l-1} + O\left(\frac{1}{(n+a+b)^{r+1}}\right).\]
Otherwise,
\[\mbP_{n,\a,\b}(x_{j_1},...,x_{j_r}) = O\left(\frac{1}{(n+a+b)^r}\right).
\]
\end{lemma}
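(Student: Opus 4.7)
The plan is to adapt the inductive argument from the proof of Lemma \ref{rsymbols}. The base case $r=1$ follows immediately from \eqref{1BOXA}: summing the two symbol probabilities for the box $(n-j_1-1,j_1)$ on the third main diagonal yields
\[\mbP_{n,\a,\b}(x_{j_1})=\frac{(j_1-1+b)+(n-j_1-2+a)}{(n+a+b-2)_2}=\frac{1}{n+a+b-2},\]
which equals $\frac{1}{n+a+b-1}+O\!\left(\frac{1}{(n+a+b)^2}\right)$. For the inductive step, a preliminary shift $\hn=n-j_1+1$, $\hbb=b+j_1-1$ (obtained by applying \eqref{SUBT} to $S[1,j_1]$) reduces to $j_1=1$; the target depends only on $n+a+b$ and $r$ and is invariant under this shift.

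In the main case $j_2\ge 4$, I partition the event $\{x_1,x_{j_2},\dots,x_{j_r}\}$ on the contents of the two inner second-diagonal boxes $(n-1,1)$ and $(n-2,2)$ of the size-$3$ sub-tableau $S[n-2,1]$, producing a three-way decomposition analogous to \eqref{3cases}. Configurations in which at least one of these inner boxes is non-empty contribute $O(1/(n+a+b)^{r+1})$: Lemma \ref{ndsx} bounds each such configuration of $S[n-2,1]$ by $O(1/(n+a+b)^2)$, and the inductive hypothesis, applied to $x_{j_2},\dots,x_{j_r}$ on the third main diagonal of the remaining sub-tableau (of size $n-3$), contributes $O(1/(n+a+b)^{r-1})$. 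The main contribution $\mbP(x_1,0_{n-1,1},0_{n-2,2},x_{j_2},\dots,x_{j_r})$ is obtained by splitting on $x_1\in\{\alpha,\beta\}$ and carrying out a direct size-$3$ calculation mirroring that of Lemma \ref{rsymbols}, yielding
\[\mbP(x_1,0_{n-1,1},0_{n-2,2})=\frac{n+a+b-3}{(n+a+b-1)_2}=\frac{1}{n+a+b-1}+O\!\left(\frac{1}{(n+a+b)^2}\right);\]
the weight-counting argument used in that proof then shows that the conditional probability of $x_{j_2},\dots,x_{j_r}$ given the condition equals $\mbP_{n-2,\alpha,\beta}(x_{j_2-2},\dots,x_{j_r-2})$. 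Applying the inductive hypothesis and multiplying, the resulting approximate product $\frac{n+a+b-3}{(n+a+b-1)_2}\prod_{l=1}^{r-1}\frac{1}{n+a+b-r+l-2}$ agrees with the target $\prod_{l=1}^{r}\frac{1}{n+a+b-r+l-1}$ up to a relative factor $1+O(1/(n+a+b))$, contributing a further $O(1/(n+a+b)^{r+1})$ error.

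In the remaining case $j_2\in\{2,3\}$ only the weaker bound $O(1/(n+a+b)^r)$ is required. The pair $(x_1,x_{j_2})$ lies inside $S[n-3,1]$ (size $4$) if $j_2=2$, or $S[n-4,1]$ (size $5$) if $j_2=3$. A direct enumeration in the small sub-tableau, parallel to the calculation producing \eqref{2zeros}, gives $\mbP(x_1,x_{j_2})=O(1/(n+a+b)^2)$ by checking that each of the four joint symbol assignments $(s_1,s_2)\in\{\alpha,\beta\}^2$ has probability of that order. Conditional on any such configuration, several rows and columns are pinned down, and the remaining events $x_{j_3},\dots,x_{j_r}$ live in a sub-tableau of size at most $n-4$ (resp.\ $n-5$); by the inductive hypothesis their joint probability is $O(1/(n+a+b)^{r-2})$. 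Multiplying yields $O(1/(n+a+b)^r)$ as needed.

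The main obstacle is the identification, in the main term of Case 1, of the conditional joint distribution of $x_{j_2},\dots,x_{j_r}$ with the unconditional law of $x_{j_2-2},\dots,x_{j_r-2}$ under $\mbP_{n-2,\alpha,\beta}$ rather than under $\mbP_{n-2,\alpha,\hb}$ with $\hbb=b+2$ (which is what \eqref{SUBT} gives naively for $S[1,3]$). This reduces to a weight-counting identity---implicit in the $\alpha$-sub-case of Lemma \ref{rsymbols} but needing verification also in the $\beta$-sub-case, where column $1$ above $(n-2,1)$ is not pinned down---saying that the sum of column-$2$ weights compatible with each conditioning configuration, multiplied by the column-$1$ weight and the partition-function ratio $Z_{n-2}(\alpha,\beta)/Z_n(\alpha,\beta)$, reproduces the size-$3$ probability computed directly. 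Once this identity is verified the induction closes cleanly.
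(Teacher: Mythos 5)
Your skeleton matches the paper's (induction on $r$, reduction to $j_1=1$, a three-way split on the contents of boxes $(n-1,1)$ and $(n-2,2)$, separate cases $j_2\ge4$ and $j_2<4$), and your base case and the approximate arithmetic of the would-be main term are fine. The genuine gap is in every place where the corner symbol is a $\beta$. You repeatedly bound a term of the form $\mbP(\mathrm{config})\cdot\mbP(x_{j_2},\dots,x_{j_r}\mid\mathrm{config})$ by ``Lemma~\ref{ndsx} gives $O(1/(n+a+b)^2)$ for the configuration, induction gives $O(1/(n+a+b)^{r-1})$ for the rest.'' But when the configuration contains $\b$ in box $(n-2,1)$, the conditional law of the remaining tableau is \emph{not} an unconditional law on a smaller tableau: by the analogue of Lemma~\ref{BSWCorner} it is a law on a size-$(n-2)$ tableau (with $b$ shifted to $b+1$) \emph{conditioned on a corner $\beta$}, and the induction hypothesis says nothing about that conditional probability. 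Bounding it by $\mbP(\cdot)/\mbP(\b_{n-2,1})$ costs a factor $\frac{n+a+b-2}{n+a-3}$, which after the reduction to $j_1=1$ can be of order $n+a+b$ (the reduced $n$ stays bounded when the original $j_1$ is of order $n$, while $b$ grows); this destroys one power and turns your claimed $O(1/(n+a+b)^{r+1})$ error terms in the case $j_2\ge4$ into $O(1/(n+a+b)^{r})$, which would swamp the main term. The paper's proof exists precisely to defuse this: Lemma~\ref{switchbeta}, the subtraction identity \eqref{subtract}, the exact cancellation $\mbP_{n}(\a_{n-2,2},\a_1)-\mbP_n(\a_{n-2,2},\b_1)\mbP_{n-2}(\a_{n-2,1})/\mbP_{n-2}(\b_{n-2,1})=0$, and the observation that in every ratio of the form $\mbP_n(\dots,\b_1)/\mbP_{n-2}(\b_{n-2,1})$ the dangerous factor $n+a-3$ cancels. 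None of this is in your write-up.

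The same issue infects your main term. You defer it to a ``weight-counting identity'' asserting that the conditional law given $\{x_1,0_{n-1,1},0_{n-2,2}\}$ is the unconditional $\mbP_{n-2,\a,\b}$-law of the shifted events; that identity is false as an exact statement (in the $\beta$ sub-case the correct object is $\mbP_{n-2,\a,\widetilde\b}(\cdot\mid\b_{n-2,1})$ with $\widetilde b=b+1$), and the approximate version you actually need is essentially the estimate being proved, so ``once this identity is verified the induction closes'' is circular. Two smaller points: in the case $j_2<4$ you have the combinatorics backwards --- it is $j_2=3$ that is impossible (since a symbol in $(n-2,1)$ forces $\a_{n-2,3}$, emptying column $3$ above it), while $j_2=2$ is the case requiring the four-way total-probability argument; and there too the conditional factors $\mbP(\cdot\mid\b_{n-2,1},\cdot)$ need the explicit ratio computations (or the complementarity of $\a_{n-3,2}$ and $\b_{n-3,2}$ used in the paper) rather than a bare appeal to the induction hypothesis, although in that case only the weaker $O(1/(n+a+b)^r)$ bound is at stake.
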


\begin{proof} 
The proof is by induction on $r$. When $r=1$ then by  the same argument as in the beginning of the proof of Lemma~\ref{rsymbols}, we get
\beq*
\mbP_{n,\a,\b}(x_{j_1})\ne0)
&=&\frac{n+a+b-3}{(n+a+b-1)_2}+O\left(\frac{1}{(n+a+b)^2}\right)\\
&=&\frac1{n+a+b-1}+O\left(\frac{1}{(n+a+b)^2}\right)
\eeq*
Assume the statement holds for integers up to $r-1$. We may and do assume that $j_1=1$. 
We again consider two cases. 
\setcounter{case}{0}
\begin{case}\label{firstfarx} $j_2 \ge 4$.
\end{case}
As in  \eqref{3cases} we split the  probability into three pieces according to 
whether  the boxes $(n-1,1)$, $(n-2,2)$ are empty or  not 
\begin{eqnarray}
\mbP_{n,\a,\b}(x_{1},...,x_{j_r})
&=&\mbP_{n,\a,\b}(0_{n-1,1},0_{n-1-1,2}, x_{1},...,x_{j_r})\nonumber\\&&\quad
+\mbP_{n,\a,\b}(\b_{n-1,1},0_{n-2,2},x_{1},...,x_{j_r})\label{3casesx}\\&&\quad
+\mbP_{n,\a,\b}(\a_{n-2,2},x_{1},...,x_{j_r}).\nonumber
\end{eqnarray}
The main contribution is the first probability which can be broken up into two cases.
%
\begin{subcase}\label{alpha} $x_1=\a$.
\end{subcase}
This is the case considered in the proof of Lemma~\ref{rsymbols}  and gives 
\beq*
&&\mbP_{n,\a,\b}(0_{n-1,1},0_{n-2,2}, \a_{1},x_{j_2},...,x_{j_r})\\
&&\qquad=\mbP_{n-2,\a,\b}(x_{j_2-2},...,x_{j_r-2})\mbP_{n,\a,\b}(0_{n-1,1},0_{n-2,2}, \a_{1}).
\eeq*

\begin{subcase}\label{beta}$x_1=\b$.
\end{subcase}
Then
\beq*
&&\mbP_{n,\a,\b}(0_{n-1,1},0_{n-2,2}, \b_{1},x_{j_2},...,x_{j_r})\\
&&\quad=\mbP_{n,\a,\b}(x_{j_2},...,x_{j_r}\hspace{1mm}|\hspace{1mm}0_{n-1,1},0_{n-2,2}, \b_{1})\mbP_{n,\a,\b}(0_{n-1,1},0_{n-2,2}, \b_{1}).\eeq*
As can be seen by removing the $(n-2)$nd row and the third column, the conditional probability is equal to 
\beq*&&\mbP_{n-1,\a,\b}(x_{j_2-1},...,x_{j_r-1}\hspace{1mm}|\hspace{1mm}0_{n-2,1}, \b_{n-1,1})=
\mbP_{n-1,\a,\b}(x_{j_2-1},...,x_{j_r-1}\hspace{1mm}|\hspace{1mm}\b_{n-2,2})\\
&&\qquad=\mbP_{n-2,\a,\widetilde\b}(x_{j_2-2},...,x_{j_r-2}\hspace{1mm}|\hspace{1mm}\b_{n-2,1}),\qquad\mbox{with $\widetilde b=b+1$,}
\eeq*
where the first equality  above follows  by writing the probability of the left as the ratio and applying Lemma~\ref{switchbeta} to both numerator and denominator and the second follows from \eqref{SUBT} by observing that at this point no entries of the first column are involved so that it can be removed.

At this point $\b_{n-2,1}$ is a corner box of staircase tableau of size $n-2$ and we can use \eqref{subtract} to get 
\beq*
&&\mbP_{n-2,\a,\widetilde\b}(x_{j_2-2},...,x_{j_r-2},\b_{n-2,1})\\&&\qquad=
\mbP_{n-2,\a,\widetilde\b}(x_{j_2-2},...,x_{j_r-2})-\mbP_{n-3,\a,\widetilde\b}(x_{j_2-3},\dots,x_{j_r-3})\mbP_{n-2,\a,\widetilde\b}(\a_{n-2,1})
.
\eeq*
Combining the above expressions and adding Case~\ref{alpha} and Case~\ref{beta} we obtain 
\begin{eqnarray}\label{mainprob}
\nonumber&&\mbP_{n,\a,\b}(x_1,\dots,x_{j_r})=\mbP_{n-2,\a,\b}(x_{j_2-2},...,x_{j_r-2})\mbP_{n,\a,\b}(0_{n-1,1},0_{n-2,2}, \a_{1})\\&&\quad+\mbP_{n-2,\a,\widetilde\b}(x_{j_2-2},...,x_{j_r-2})\frac{\mbP_{n,\a,\b}(0_{n-1,1},0_{n-2,2}, \b_{1})}{\mbP_{n-2,\a,\widetilde\b}(\b_{n-2,1})}\\&&\quad-\mbP_{n-3,\a,\widetilde\b}(x_{j_2-3},...,x_{j_r-3})\frac{\mbP_{n,\a,\b}(0_{n-1,1},0_{n-2,2}, \b_{1})\mbP_{n-2,\a,\widetilde\b}(\a_{n-2,1})}{\mbP_{n-2,\a,\widetilde\b}(\b_{n-2,1})}.\nonumber
\end{eqnarray}
%
%
%
%
Note that by \eqref{SUBT} 
\begin{equation}\label{first}
\mbP_{n-2,\a,\b}(x_{j_2-2},...,x_{j_r-2})=\mbP_{n-3,\a,\widetilde\b}(x_{j_2-3},...,x_{j_r-3}).
\end{equation}
Now, using \eqref{SUBT} with $\haa=n+a-3$, we calculate directly,
\begin{eqnarray}
\mbP_{n,\a,\b}(0_{n-1,1},0_{n-2,2}, \a_{1})&=&\mbP_{3,\ha,\b}(0_{2,1},0_{1,2}, \a_{1})=\frac{b}{(n+a+b-1)_2}\\
\mbP_{n,\a,\b}(0_{n-1,1},0_{n-2,2}, \b_{1})
&=&\frac{n+a-3}{(n+a+b-1)_2}.
\end{eqnarray}
%
%
Also,  by \eqref{1BOXD} 
\begin{eqnarray}
\mbP_{n-2,\a,\widetilde\b}(\a_{n-2,1})&=&\frac{\widetilde b }{n-2+a+\widetilde b-1}=\frac{b+1}{n+a+b-2}, \\
\mbP_{n-2,\a,\widetilde\b}(\b_{n-2,1})&=&
\frac{n+a-3}{n+a+b-2}.\label{last}
\end{eqnarray}
So by substituting Equations (\ref{first})-(\ref{last}) into Equation (\ref{mainprob}) and simplifying 
\beq*
&&\mbP_{n,\a,\b}(x_1,\dots,x_{j_r})
=\mbP_{n-2,\a,\widetilde\b}(x_{j_2-2},...,x_{j_r-2})\frac{1}{n+a+b-1}\\
&&\qquad+\mbP_{n-3,\a,\widetilde b}(x_{j_2-3},...,x_{j_r-3})
\cdot\left(\frac{b}{(n+a+b-1)_2}-\frac{b+1}{(n+a+b-1)_2} \right).
\eeq*
The second summand by the induction hypothesis  is 
\[O\left(\frac1{(n+a+b)^{r-1}}\cdot\frac1{(n+a+b)^2}\right)=O\left(\frac1{(n+a+b)^{r+1}}\right).\]
If $j_l\leq j_{l+1}-3$  for all $l=2,\dots,r-1$ then by the induction hypothesis, the first summand is
\beq*
&&\left(\prod_{l=1}^{r-1}\frac{1}{n-2+a+\widetilde b-(r-1)+l-1}+O\left(\frac{1}{(n-2+a+\widetilde b)^r}\right)\right)\frac{1}{n+a+b-1}\\
&&\qquad=\left(\prod_{l=1}^{r-1}\frac{1}{n+a+b-r+l-1}+O\left(\frac{1}{(n+a+b-1)^r}\right)\right)\frac{1}{n+a+b-1}\\
&&\qquad=\prod_{l=1}^r\frac{1}{n+a+b-r+l-1}+O\left(\frac{1}{(n+a+b-1)^{r+1}}\right).
\eeq*
On the other hand, if for some $2\le l\le r-1$, $j_l>j_{l+1}-3$ then by the induction hypothesis again the same expression is,
\beq*
O\left(\frac{1}{(n-2+a+\widetilde b)^{r-1}}\right)\frac{1}{n+a+b-1}
=O\left(\frac{1}{(n+a+b-1)^r}\right).
\eeq*

Now, returning to Equation (\ref{3casesx}), the first two probabilities can be calculated in a similar manner to obtain an equation like Equation (\ref{mainprob}). For example, $\mbP_{n,\a,\b}(\a_{n-2,2},x_{1},...,x_{j_r})$ is
\beq*
&&\mbP_{n-3,\a,\b}(x_{j_2-3},...,x_{j_r-3})\mbP_{n,\a,\b}(\a_{n-2,2}, \a_{1})
\\&&\qquad+\mbP_{n-2,\a,\b}(x_{j_2-2},...,x_{j_r-2}|\b_{n-2,1})\mbP_{n,\a,\b}(\a_{n-2,2},\b_{1})\\
&&\quad=\mbP_{n-3,\a,\b}(x_{j_2-3},...,x_{j_r-3})\mbP_{n,\a,\b}(\a_{n-2,2}, \a_{1})
\\&&\qquad+\mbP_{n-2,\a,\b}(x_{j_2-2},...,x_{j_r-2})\frac{\mbP_{n,\a,\b}(\a_{n-2,2},\b_{1})}{\mbP_{n-2,\a,\b}(\b_{n-2,1})}
\\&&\qquad-\mbP_{n-3,\a,\b}(x_{j_2-2},...,x_{j_r-2})\frac{\mbP_{n,\a,\b}(\a_{n-2,2},\b_{1})\mbP_{n-2,\a,\b}(\a_{n-2,1})}{\mbP_{n-2,\a,\b}(\b_{n-2,1})}.
\eeq*
By \eqref{SUBT} and \eqref{1BOXD}
\[
\frac{\mbP_{n,\a,\b}(\a_{n-2,2},\b_{1})}{\mbP_{n-2,\a,\b}(\b_{n-2,1})}
=\frac{\frac{a+n-3}{(n+a+b-1)_3}}{\frac{n+a-3}{n+a+b-3}}
=O\left(\frac{1}{(n+a+b)^2}\right)
\]
and 
\beq*&&
\mbP_{n,\a,\b}(\a_{n-2,2}, \a_{1})
-\frac{\mbP_{n,\a,\b}(\a_{n-2,2},\b_{1})\mbP_{n-2,\a,\b}(\a_{n-2,1})}{\mbP_{n-2,\a,\b}(\b_{n-2,1})}
\\&&\quad=\frac b{(n+a+b-1)_3}-\frac{\frac{a+n-3}{(n+a+b-1)_3}\cdot\frac b{n+a+b-3}}{\frac{n+a-3}{n+a+b-3}}=0.\eeq*
Therefore, by the induction hypothesis, 
\[\mbP_{n,\a,\b}(\a_{n-2,2},x_{1},...,x_{j_r})=O\left(\frac{1}{(n+a+b)^{r+1}}\right).
\]
In the same way, 
\[
\mbP_{n,\a,\b}(\b_{n-1,1},0_{n-2,2},x_{1},...,x_{j_r})=O\left(\frac{1}{(n+a+b)^{r+1}}\right).
\]
\begin{case}\label{firstclosex}  $j_2 <4$. 
\end{case}
The case where $j_2=3$ is impossible by the rules of staircase tableaux and hence has probability zero. For the case where $j_2=2$,  by the law of total probability
\begin{eqnarray}\label{4_prob}
\mbP_{n,\a,\b}(x_{1},...,x_{j_r})&=&\nonumber
\mbP_{n,\a,\b}(x_{j_3}...,x_{j_r}|\a_{1},\a_{2})\mbP_{n,\a,\b}(\a_{1},\a_{2})\\ \nonumber
&&+\mbP_{n,\a,\b}(x_{j_3},...,x_{j_r}|\a_{1},\b_{2})\mbP_{n,\a,\b}(\a_{1},\b_{2})\\
&&+\mbP_{n,\a,\b}(x_{j_3},...,x_{j_r}|\b_{1},\a_{2})\mbP_{n,\a,\b}(\b_{1},\a_{2})\\
&&+\mbP_{n,\a,\b}(x_{j_3},...,x_{j_r}|\b_{1},\b_{2})\mbP_{n,\a,\b}(\b_{1},\b_{2}).\nonumber
\end{eqnarray}
The first term is handled in Case~2 of Lemma~\ref{rsymbols} and is $O(1/(n+a+b)^r)$. The second, by removing the last three rows of a tableau and its  first, third, and fourth column is
\beq*
&&\mbP_{n-3,\a,\b}(x_{j_3-3},...,x_{j_r-3}\hspace{1mm}|\hspace{1mm}\b_{n-3,1})\mbP_{n,\a,\b}(\a_{1},\b_{2})\\&&\quad\le
\mbP_{n-3,\a,\b}(x_{j_3-3},...,x_{j_r-3})\frac{\mbP_{n,\a,\b}(\a_{1},\b_{2})}{\mbP_{n-3,\a,\b}(\b_{n-3,1})}.
\eeq*
By \eqref{SUBT} and \eqref{1BOXD} and remembering that $b$ may depend on $n$ (see \eqref{hats}),
\[
\frac{\mbP_{n,\a,\b}(\a_{1},\b_{2})}{\mbP_{n-3,\a,\b}(\b_{n-3,1})}=\frac{\frac{(n+a-4)b}{(n+a+b-4)^{\overline4}}}{\frac{n+a-4}{n+a+b-4}}=O\left(\frac b{(n+a+b)^3}\right)=O\left(\frac 1{(n+a+b)^2}\right)
\]
so that, by the induction hypothesis, the whole term is 
\[O\left(\frac1{(n+a+b)^{r-2}}\cdot\frac1{(n+a+b)^2}\right)=O\left(\frac1{(n+a+b)^r}\right).\]
Finally, consider the sum of the last two terms in \eqref{4_prob}. By removing  the second and third column along with the $(n-1)$st and $(n-2)$nd row for the first probability and the third and forth column and the $(n-2)$nd and $(n-3)$rd row for the other they are 
\beq*
&&\mbP_{n-2,\a,\b}(x_{j_3-2},...,x_{j_r-2}\hspace{1mm}|\hspace{1mm}\b_{n-2,1},\a_{n-3,2})\mbP_{n,\a,\b}(\b_{1},\a_{2})\\
&&\qquad+\mbP_{n-2,\a,\b}(x_{j_3-2},...,x_{j_r-2}\hspace{1mm}|\hspace{1mm}\b_{n-2,1},\b_{n-3,2})\mbP_{n,\a,\b}(\b_{1},\b_{2})\\
&&\quad=\mbP_{n-2,\a,\b}(x_{j_3-2},...,x_{j_r-2},\b_{n-2,1},\a_{n-3,2})\frac{\mbP_{n,\a,\b}(\b_{1},\a_{2})}{\mbP_{n-2,\a,\b}(\b_{n-2,1},\a_{n-3,2})}\\
&&\qquad+\mbP_{n-2,\a,\b}(x_{j_3-2},...,x_{j_r-2},\b_{n-2,1},\b_{n-3,2})\frac{\mbP_{n,\a,\b}(\b_{1},\b_{2})}{\mbP_{n-2,\a,\b}(\b_{n-2,1},\b_{n-3,2})}.
\eeq*
We now observe that 
\beq*
\frac{\mbP_{n,\a,\b}(\b_{1},\a_{2})}{\mbP_{n-2,\a,\b}(\b_{n-2,1},\a_{n-3,2})}=\frac{\mbP_{n,\a,\b}(\b_{1},\b_{2})}{\mbP_{n-2,\a,\b}(\b_{n-2,1},\b_{n-3,2})}=\frac1{(n+a+b-2)^{\overline2}}
\eeq*
as can be seen by applying \eqref{SUBT} to each of the four probabilities. 
Further,  since the box $(n-3,2)$ is on the main diagonal of a staircase tableau of size $n-2$, $\b_{n-3,2}$ and $\a_{n-3,2}$ are complements of each other.
Therefore,  the sum of the  last two terms in \eqref{4_prob} is
\beq*
&&\mbP_{n-2,\a,\b}(x_{j_3-2},...,x_{j_r-2},\b_{n-2,1})\frac{1}{(n+a+b-1)_2}\\&&\qquad\le
\mbP_{n-2,\a,\b}(x_{j_3-2},...,x_{j_r-2})\frac{1}{(n+a+b-1)_2}=O\left(\frac1{(n+a+b)^{r}}\right)
\eeq*
by the induction hypothesis. 
\end{proof}

\n We can now give 
the asymptotic distribution of  the number of symbols on the third main diagonal. 
\begin{theorem}\label{thm:3rdx}
Let  $X^{(3)}_{n}$  be the number of symbols on the third main diagonal, i.e. 
$X_{n}^{(3)} := \sum^{n-2}_{j=1} I_{x_{j}}$. Then,
as $n\to\infty$,
\[
X_{n}^{(3)} \stackrel{d}{\rightarrow} Pois \left( 1 \right).
\]
\end{theorem}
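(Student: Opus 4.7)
The plan is to mimic the proof of Theorem~\ref{thm:symb} but, since Lemma~\ref{rsymbolsx} only provides asymptotics (with error terms) and only under the gap condition \eqref{4_diff}, I will split the factorial moment sum in exactly the same manner as in the proof of Theorem~\ref{alpha_on_3rd}. By Lemma~\ref{fac_mom},
\[
\mathbb{E}(X_n^{(3)})_r = r!\sum_{1\le j_1<\dots<j_r\le n-2}\mathbb{P}_{n,\a,\b}(x_{j_1},\dots,x_{j_r}),
\]
and I would break this sum into $\sum_J + \sum_{J^c}$ where
\[
J=\{1\le j_1<\dots<j_r\le n-2:\ j_{l+1}-j_l\ge 3\ \forall\, l\},\qquad J^c=\{1\le j_1<\dots<j_r\le n-2\}\setminus J.
\]

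For the main sum over $J$, the standard substitution $k_l=j_l-2(l-1)$ shows $|J|=\binom{n-2r}{r}=\frac{n^r}{r!}+O(n^{r-1})$, and by Lemma~\ref{rsymbolsx} each summand equals
\[
\prod_{l=1}^r\frac{1}{n+a+b-r+l-1}+O\!\left(\frac{1}{(n+a+b)^{r+1}}\right)=\frac{1}{n^r}+O\!\left(\frac{1}{n^{r+1}}\right).
\]
Multiplying gives
\[
r!\sum_J\mathbb{P}_{n,\a,\b}(x_{j_1},\dots,x_{j_r})=r!\cdot\frac{n^r}{r!}\cdot\frac{1}{n^r}+O\!\left(\frac{1}{n}\right)\longrightarrow 1.
\]

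For the tail sum over $J^c$, I would argue that $|J^c|=\binom{n-2}{r}-\binom{n-2r}{r}=O(n^{r-1})$ (fixing which adjacent pair violates the gap condition costs one degree of freedom), while Lemma~\ref{rsymbolsx} gives $\mathbb{P}_{n,\a,\b}(x_{j_1},\dots,x_{j_r})=O(1/n^r)$ on this set. Hence
\[
r!\sum_{J^c}\mathbb{P}_{n,\a,\b}(x_{j_1},\dots,x_{j_r})=O\!\left(\frac{n^{r-1}}{n^r}\right)=O\!\left(\frac{1}{n}\right).
\]
Combining the two contributions yields $\mathbb{E}(X_n^{(3)})_r\to 1$ for every $r\ge 1$, and the method of (factorial) moments (\cite[Theorem~20, Chapter~1]{B}, as used in Theorems~\ref{DT} and \ref{thm:symb}) then gives $X_n^{(3)}\stackrel{d}{\to}\operatorname{Pois}(1)$.

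The only place that requires any real care is the bookkeeping of the error term $O(1/(n+a+b)^{r+1})$ from Lemma~\ref{rsymbolsx} when summed over the $\Theta(n^r)$ tuples in $J$, together with checking that $|J^c|=O(n^{r-1})$; everything else is a direct adaptation of the arguments already carried out for Theorem~\ref{alpha_on_3rd} and Theorem~\ref{thm:symb}.
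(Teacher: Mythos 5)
Your proposal is correct and follows essentially the same route as the paper: the paper likewise splits the factorial-moment sum into the gap-condition set $J$ and its complement $J^c$, applies Lemma~\ref{rsymbolsx} on each piece, and uses $|J|=\binom{n-2}{r}+O(n^{r-1})$ and $|J^c|=O\bigl(\binom{n-2}{r-1}\bigr)$ to obtain $\mathbb{E}(X_n^{(3)})_r\to 1$ before invoking the method of factorial moments. Your exact count $|J|=\binom{n-2r}{r}$ and the resulting error bookkeeping are consistent with the paper's asymptotics, so there is nothing substantive to add.
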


\begin{proof}
As in the proof of Theorem~\ref{alpha_on_3rd} we 
split the sum 
\begin{equation}\sum_{1 \leq j_{1} < ... < j_{r} \leq n - 2} \mathbb{P}(x_{j_{1}},\ldots, x_{j_{r}}) =
 \sum_{J} \mathbb{P}(x_{j_{1}},\ldots, x_{j_{r}}) +
\sum_{J^c}\mathbb{P}(x_{j_{1}},\ldots, x_{j_{r}}). 
\label{two_termsx}
\end{equation}
By Lemma~\ref{rsymbolsx}, 
\beq* 
&&\sum_{J} \mathbb{P}(x_{j_{1}},\ldots, x_{j_{r}}) =
\sum_{J}\left(\prod_{k=1}^{r}\frac{1}{n+a+b-r+k-1} + O\left(\frac{1}{(n+a+b)^{r+1}}\right)\right)\\
&&=\left({n-2\choose r}+O\left(n^{r-1}\right)\right)
\left(\prod_{k=1}^{r}\frac{1}{n+a+b-r+k-1} + O\left(\frac{1}{(n+a+b)^{r+1}}\right)\right)\\
&&=
\frac{1}{r!}+O\left(\frac{1}{n}\right),\quad\mbox{as\ }n \rightarrow \infty. 
\eeq*
Finally, since cardinality of $J^c$ is $O\left({n-2\choose r-1}\right)$, by Lemma~\ref{rsymbolsx},
\[\sum_{J^c}\mathbb{P}(x_{j_{1}},\ldots, x_{j_{r}})=O\left({n-2\choose r-1}\frac1{n^r}\right)=O\left(\frac1n\right).
\]
Combining the last two expressions with \eqref{two_termsx} proves that 
\[\mathbb{E}(X_{n})_{r} =r! \left( \sum_{1 \leq j_{1} < ... < j_{r} \leq n - 2} \mathbb{P}(x_{j_{1}},\ldots, x_{j_{r}})\right)\to1,\quad \mbox{as\  } n\to\infty.\]  
\end{proof}


\end{document}